\newtheorem{theorem}{Theorem}[section]
\newtheorem{corollary}[theorem]{Corollary}
\newtheorem{definition}[theorem]{Definition}
\newtheorem{proposition}[theorem]{Proposition}
\newtheorem{remark}[theorem]{Remark}
\def\N{\mbox{I\hspace{-.15em}N}}
\def\R{\mbox{I\hspace{-.15em}R}}
\def\P{\mbox{I\hspace{-.15em}P}}
\begin{document}

\begin{frontmatter}

\title{Construction of a $C^2$ class finite element based on the Clough-Tocher subdivision }

\author[mymainaddress]{\textsc{Haudi\'{e}} Jean-St\'{e}phane Inkp\'{e} \corref{mycorrespondingauthor}}
\cortext[mycorrespondingauthor]{Corresponding author}
\ead{jsinkpe@gmail.com}
\author[mymainaddress]{\textsc{Koua Brou} Jean Claude}
\ead{k\_brou@hotmail.com}
\author[mysecondaryaddress]{A. \textsc{Le M\'ehaut\'{e}}}

\address[mymainaddress]{UFR de Math\'{e}matiques et Informatique, Universit\'{e} F\'{e}lix Houphou\"{e}t Boigny de Cocody 22 BP 582 Abidjan 22, C\^{o}te d'Ivoire}
\address[mysecondaryaddress]{Laboratoire Jean Leray, D\'{e}partement de Math\'{e}matiques,Universit\'{e} de Nantes 2, rue de la Houssini\`{e}re, F--44072, Nantes Cedex, FRANCE.}

\begin{abstract}
In this paper, we construct a $C^2$ finite element based on the Clough-Tocher subdivision. We use derivatives order up to two at the vertices and cross boundary derivatives order up to two along the exterior edges of the triangle. The centroid of the triangle is just evaluated. The interpolant used is globally $C^2,$ has local support, is piecewise polynomial of degree less or equal to 5.
\end{abstract}

\begin{keyword}
Clough-Tocher scheme \sep Piecewise polynomial \sep Finite element %

\MSC[2010] 65-D05 \sep 65-D07\sep 65-D10.
\end{keyword}

\end{frontmatter}


\section{Introduction}
Let $\Omega $\, be a bounded polygonal domain, and $\Delta $\, a partition of $\bar{\Omega}$\, made up of triangles. Let assume the mild assumption that this triangulation is of Delaunay type. Here, we focus on the case where each triangle of the collection $\Delta $\, is splitted into three sub-triangles by joining some interior points to each vertices. This triangulation, noted $\Delta_{CT} $\, is the so-called Clough-Tocher's triangulation, in reference to the pioneer authors who have considered this type of triangulation \citet{CT65}. Without loss of generalities, we take the centroid as the split point for each triangles. 
A bivariate spline function is then obtained by building a piecewise function on $\Delta_{CT} $\, which is polynomial with the same degree on each triangle on the same partition.
Several works has been done on the splines functions with respect to the triangulation of Clough-Tocher, one may especially refer to \citet{CT65,Alfeld1,AlLLS02,LaghSablon93}.
It should be noted that this type of approach, which involves cutting the triangle into sub-triangles has been used by several authors in the literature \citet{CT65,Alfeld1,AlLLS02,MjlLls07}. The scheme claim to yield a smooth piecewise polynomial of a low fixed degree.

Spline functions and finite elements approximation are close techniques. Constructing locally supported spline functions is quite easy when dealing with finite elements. In this case, the support of the spline is reduced to an element and its neighbors in the triangulation. Morever, the explicit construction can be made for each element individually.\\
Similary, finite elements are useful e.g. for the design of cars and aircraft. For this purpose, polynomial finite elements with $C^2$ smoothness of surfaces have been heavily studied.\\
Generally, the remaining B-coefficients are determined from the $ C^1 $\, and $ C^2 $\, conditions. Here we don't proceed in this way.\\
As in a previous paper~\citet{Kouab5} we were able to compute the B-coefficients of a piecewise cubic polynomial on $ \Delta_{CT} $\, with global $ C^1 $\, continuity by using uniquely subdivision algorithms and degree raising principle, it is natural to use the same approach for a piecewise quintic polynomial on $ \Delta_{CT} $\, with global $ C^2 $\, continuity. Moreover, the interpolant which is constructed, has local support (i.e. evaluation at a point in a specific triangle requires data only on that triangle and its neighbors in the triangulation).\\
We assume, we are given values and derivatives of order up to 2 at the vertices and cross boundary derivatives up to 2 along the exterior edges of the triangle~\citet{Alfeld1,Farin3}. Explicit formulas are given for the coefficients of this interpolant.

The paper is organized as follows. In the following section, we introduce some notations and recall some basic results, then we present our process for constructing a $C^2$ finite element.

\section{Preliminaries}

\subsection{Notations}

\begin{itemize}
\item $\left\vert \alpha\right\vert = \sum\limits_{i=1}^{n}\alpha_{i} = \alpha_{1}+\cdots+\alpha_{n} $
\item $ \alpha! = \prod\limits_{i=1}^{n}\alpha_{i}! = \alpha_{1}!\times\cdots\times\alpha_{n}! $
\item  $\lambda^{\alpha} = \prod\limits_{i=1}^{n} \lambda_{i}^{\alpha_{i}} =  \lambda_{1}^{\alpha_{1}} \times \cdots  \times \lambda_{n}^{\alpha_{n}} $
\item $\mathbb{P}_{d}\left( \mathcal{K} \right)$ is the space of bivariate polynomials of degree less or equal to $d $ defined on $\mathcal{K} $.
 \end{itemize}
\subsection{Definitions}
\paragraph{ }  Let $\mathcal{K}=\langle A_{1},A_{2},A_{3}\rangle $ be a triangle of vertices $A_{1},A_{2},A_{3} $ counter-clockwise oriented.
\paragraph{ }  Let $\lambda=\left(  \lambda_{1},\lambda_{2},\lambda_{3}\right)$ be a mulit-index of $\R$ and $\alpha=\left(  \alpha_{1},\alpha_{2},\alpha_{3}\right) $ a mulit-index of $\N $.
\begin{definition}[\textbf{Finite element}] %
Let us recall from~\citet{Ciarlet2} that a finite element is a $ \mathbf{triplet} \left(\mathcal{K},\mathcal{E},\mathcal{L}\right) $ %
which satisfies:
\begin{itemize}
\item $\mathcal{K}$ is a ${\R}^{2}$ convex polygon,
\item $\mathcal{E}$ is a vectorial space of functions defined on $\mathcal{K},$ %
\item $\mathcal{L}$ is an element of the dual of $\mathcal{E}$ formed by $s$ linear forms $\left(  l_{1},\ldots,l_{s}\right) $ defined on $\mathcal{E}$ such that
\end{itemize} %
$\mathcal{L} $ be $\mathcal{E}$-Unisolvent i.e the mapping
\begin{align*}
\Psi &  :\mathcal{E}\longrightarrow{\R}^{s}\\
\mathcal{P}  &  \mapsto\left(  l_{1}\left(  \mathcal{P}\right)  ,\ldots ,l_{s}\left(  \mathcal{P}\right)  \right)
\end{align*}
is an \textbf{isomorphism}. \\
In other words, $\mathcal{L}$ is $\mathcal{E}-$unisolvent $iff$ the two following conditions are satisfied:
\begin{enumerate}[(i)]
\item $card\mathcal{L}=\dim \mathcal{E} $,
\item if $\mathcal{L}$ being the set $ \left\{ l_{i} \right\}_{i=1,\ldots ,\dim \mathcal{E}}, $ given $v\in \mathcal{E}$ such that $ l_{i}\left( v\right) =0,$\ $i=1,\ldots ,\dim \mathcal{E},$ then \  $ v\equiv 0.$
\end{enumerate}
The linear forms $\left(  l_{1},\ldots,l_{s}\right)  $ are called \textbf{local degrees of freedom} and $\mathcal{L}=\left( l_{i}\right)  _{i=1\ldots s}$ is called \textbf{set of degrees of freedom} or \textbf{set of nodal values} of $\mathcal{K}$ .%
\end{definition}
\begin{definition}[\textbf{Barycentric coordinates }]
\textbf{The unique solution} $ \lambda $\ of the system :%
\begin{equation} \label{eqnbar}%
\left\{
\begin{array}
[l]{l}%
\mathcal{V} = \sum\limits_{i=1}^{3}\lambda_{i}A_{i}\\
\left\vert \lambda\right\vert =1
\end{array}
\right.
\end{equation}
is called \textbf{barycentric coordinates} of the point $\mathcal{V}$ with respect to $\mathcal{K}$.
\end{definition}

\begin{definition}[\textbf{Polynomial of Bernstein}]
The polynomial of degree $d$ defined on $\mathcal{K}$ by :%
\begin{align*}
B_{\alpha}^{d}\left(  \lambda\right) &= \frac{d!}{\alpha!}\lambda^{\alpha} \\
 &= \frac{d!}{\alpha_{1}!\alpha_{2}!\alpha_{3}!}\lambda_{1}^{\alpha_{1}} \lambda_{2}^{\alpha_{2}}\lambda_{3}^{\alpha_{3}}
\end{align*}
where $\left\vert \lambda\right\vert =1$\ and $\left\vert \alpha\right\vert
=d $ \ is called \textbf{Bernstein polynomial} of degree $d$ on $\mathcal{K}$.
\end{definition}
\begin{definition}[\textbf{BB-form}]
Every polynomial $\mathcal{P} \in\mathbb{P}_{d}\left(
\mathcal{K} \right)$  can be written in a unique way as
\begin{equation} \label{eqnbb}
\mathcal{P}\left(  \mathcal{V}\right)  =\sum\limits_{\left\vert \alpha \right\vert =d}b\left(  \alpha\right)  B_{\alpha}^{d}\left(  \lambda\right)
\end{equation} %
It is the \textbf{Bernstein-B\'{e}zier} form of $\mathcal{P} $ and the $ b(\alpha ) $ are the \textbf{B-coefficients or the ordinates of B\'{e}zier.}
\end{definition}
\begin{definition}[\textbf{Triangle of B\'{e}zier and control points}]
The set
\[
\left\lbrace (\xi_{\alpha }, b(\alpha )),\vert\alpha\vert =d\right\rbrace
\]
where
\begin{equation*}
\xi_{\alpha }=\frac{\alpha _{1}A_{1}+\alpha _{2}A_{2}+\alpha _{3}A_{3}}{d}
\end{equation*}
is called the set of \textbf{control points} and defines the \textbf{triangle of B\'{e}zier}.
\end{definition}

\subsection{Properties}
\begin{proposition}[\textbf{Cross derivatives}]
Let $\mathcal{P\in}\mathbb{P}_{d}\left(  \mathcal{K}\right) $ be such that
\begin{equation*}
\mathcal{P}\left(\mathcal{V}\right)  =\sum\limits_{\left\vert \alpha \right\vert =d}b\left(\alpha\right)B_{\alpha}^{d}\left(  \lambda \right)
\end{equation*}
Then
\begin{enumerate}[(1)]
\item \begin{equation*}
D_{\delta}^{r}\mathcal{P}\left( \mathcal{V}\right) = \sum\limits_{\left\vert \alpha\right\vert = d-r}\frac
{d!\times p_{\alpha}\left(  \delta\right)}{\left( d-r\right)!}  B_{\alpha}^{d-r}\left(  \lambda\right)
\end{equation*}
with
\begin{equation*}
p_{\alpha}\left(  \delta\right)  =\sum
\limits_{\left\vert \beta\right\vert =r}b\left(  \alpha+\beta\right)
B_{\beta}^{r}\left(  \delta\right)
\end{equation*}
\item \begin{equation*}
D_{\delta}^{r}D_{\eta}^{s}\mathcal{P}\left( \mathcal{V} \right) =
\sum\limits_{\left\vert \alpha\right\vert = d-r-s}
\frac{d! \times p_{\alpha}\left( \delta,\eta\right)}{\left(d-r-s
\right)!} \times B_{\alpha }^{d-r-s}\left( \lambda\right)
\end{equation*}
with
\begin{equation*}
p_{\alpha}\left( \delta,\eta\right)  = \sum\limits_{\left\vert \beta\right\vert =s}
p_{\alpha+\beta}\left(  \delta\right)  B_{\beta}^{s}\left(  \eta\right)
\end{equation*}
\end{enumerate}
\end{proposition}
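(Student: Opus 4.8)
The plan is to reduce everything to one elementary computation — the directional derivative of a single Bernstein polynomial — and then to propagate it by induction. Throughout, a direction $\delta=(\delta_{1},\delta_{2},\delta_{3})$ is taken in barycentric form, i.e. as the difference of two points, so that $|\delta|=0$, and the basic geometric fact used is that $D_{\delta}$ acts on the barycentric coordinates by $D_{\delta}\lambda_{i}=\delta_{i}$; here $B_{\beta}^{r}(\delta)$ is read off the monomial formula $B_{\beta}^{r}(\delta)=\tfrac{r!}{\beta!}\delta^{\beta}$. Writing $e_{i}$ for the multi-index with a $1$ in position $i$ and $0$ elsewhere, differentiating $B_{\alpha}^{d}(\lambda)=\tfrac{d!}{\alpha!}\lambda^{\alpha}$ directly yields the key identity
\begin{equation*}
D_{\delta}B_{\alpha}^{d}(\lambda)=d\sum_{i=1}^{3}\delta_{i}\,B_{\alpha-e_{i}}^{d-1}(\lambda),
\end{equation*}
with the convention $B_{\alpha-e_{i}}^{d-1}\equiv 0$ whenever $\alpha_{i}=0$. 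This is the only place the triangle geometry is needed.

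For part (1) I would argue by induction on $r$. The case $r=0$ is nothing but the BB-form of $\mathcal{P}$, since $p_{\alpha}(\delta)=b(\alpha)$ and $\tfrac{d!}{d!}=1$. Assuming the formula at order $r$, apply $D_{\delta}$ to it: the scalars $p_{\alpha}(\delta)$ do not depend on $\mathcal{V}$, so only the Bernstein factors get differentiated, and the key identity together with the substitution $\gamma=\alpha-e_{i}$ rewrites $\sum_{|\alpha|=d-r}p_{\alpha}(\delta)\,D_{\delta}B_{\alpha}^{d-r}(\lambda)$ as $(d-r)\sum_{|\gamma|=d-r-1}\bigl(\sum_{i}\delta_{i}\,p_{\gamma+e_{i}}(\delta)\bigr)B_{\gamma}^{d-r-1}(\lambda)$. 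It then remains to check that $\sum_{i}\delta_{i}\,p_{\gamma+e_{i}}(\delta)$ is exactly the order-$(r+1)$ expression $\sum_{|\beta|=r+1}b(\gamma+\beta)B_{\beta}^{r+1}(\delta)$; expanding the left side by the definition of $p$ and regrouping the terms according to $\beta=\beta'+e_{i}$, this boils down to the de Casteljau-type recursion $\sum_{i:\beta_{i}\ge 1}\delta_{i}B_{\beta-e_{i}}^{r}(\delta)=B_{\beta}^{r+1}(\delta)$, a one-line computation from the monomial formula. The factorial bookkeeping $\tfrac{d!}{(d-r)!}\cdot(d-r)=\tfrac{d!}{(d-r-1)!}$ then closes the induction.

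For part (2) I would simply iterate part (1). By part (1), $D_{\delta}^{r}\mathcal{P}$ is an element of $\mathbb{P}_{d-r}(\mathcal{K})$, and by uniqueness of the BB-form its B-coefficients are $\tfrac{d!}{(d-r)!}\,p_{\alpha}(\delta)$ for $|\alpha|=d-r$. Applying part (1) a second time to this polynomial of degree $d-r$, in the direction $\eta$ and to order $s$, and simplifying $\tfrac{d!}{(d-r)!}\cdot\tfrac{(d-r)!}{(d-r-s)!}=\tfrac{d!}{(d-r-s)!}$, gives
\begin{equation*}
D_{\eta}^{s}D_{\delta}^{r}\mathcal{P}(\mathcal{V})=\sum_{|\alpha|=d-r-s}\frac{d!\times p_{\alpha}(\delta,\eta)}{(d-r-s)!}\,B_{\alpha}^{d-r-s}(\lambda),\qquad p_{\alpha}(\delta,\eta)=\sum_{|\beta|=s}p_{\alpha+\beta}(\delta)\,B_{\beta}^{s}(\eta),
\end{equation*}
and the stated formula follows upon invoking the commutativity $D_{\eta}^{s}D_{\delta}^{r}\mathcal{P}=D_{\delta}^{r}D_{\eta}^{s}\mathcal{P}$ of directional derivatives of a polynomial.

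The only genuine obstacle is the middle step of part (1): the re-indexing and the verification that the family $\{p_{\alpha}(\delta)\}$ obeys, in the order parameter $r$, the same recursion that Bernstein polynomials obey in their degree. Everything else is routine manipulation of multi-indices and factorials.
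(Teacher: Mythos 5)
Your proof is correct: the key identity $D_{\delta}B_{\alpha}^{d}(\lambda)=d\sum_{i}\delta_{i}B_{\alpha-e_{i}}^{d-1}(\lambda)$ (valid because $\delta$ has barycentric direction coordinates summing to zero, so $D_{\delta}\lambda_{i}=\delta_{i}$), the induction on $r$ via the recursion $\sum_{i}\delta_{i}B_{\beta-e_{i}}^{r}(\delta)=B_{\beta}^{r+1}(\delta)$, and the iteration plus commutativity of directional derivatives for part (2) all check out, including the factorial bookkeeping. Note that the paper states this proposition without proof, treating it as a recalled property of the Bernstein--B\'ezier form, so there is no in-paper argument to compare against; yours is the standard textbook derivation and fills that gap correctly.
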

\begin{corollary}[\textbf{Directional derivatives}] \label{derivdir}
Let $\left.  \delta=\overrightarrow{A_{k}A_{l}}\right. $ and $\left.
\eta=\overrightarrow{A_{k}A_{n}}\right.  $ two different directions and ,%
\begin{equation} \label{eqnmixte}%
D^{r+s}\mathcal{P}\left(  \mathcal{V}\right)  \cdot\left(  \delta^{r},\eta ^{s}\right) = \sum\limits_{\left\vert \alpha\right\vert =d-r-s} b_{\alpha}^{r,s}\left( \delta,\eta\right) B_{\alpha}^{d-r-s}\left( \lambda\right)
\end{equation}
\end{corollary}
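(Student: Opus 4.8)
The plan is to obtain the corollary as an immediate specialization of part (2) of the Proposition, once the bookkeeping for how an edge vector enters the Bernstein--Bézier differentiation formulas is pinned down. First I would recall the convention that a direction written as a difference of two vertices, say $\delta=\overrightarrow{A_kA_l}=A_l-A_k$, is represented in barycentric form by the triple $\delta=(\delta_1,\delta_2,\delta_3)$ with $|\delta|=\delta_1+\delta_2+\delta_3=0$ (e.g. $\overrightarrow{A_1A_2}\leftrightarrow(-1,1,0)$), and likewise for $\eta=\overrightarrow{A_kA_n}$. With this convention the operators $D_\delta$ and $D_\eta$ appearing in the Proposition are exactly the directional derivatives along $\delta$ and $\eta$, so the notation of the Proposition specializes directly to the setting of the corollary.

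Next I would use that $\mathcal{P}$, being a polynomial, is $C^\infty$, so that $D^{r+s}\mathcal{P}(\mathcal{V})$ is a symmetric $(r+s)$-linear form; by the symmetry of higher derivatives the order in which the differentiations are performed is irrelevant, whence
\[
D^{r+s}\mathcal{P}(\mathcal{V})\cdot(\delta^{r},\eta^{s})=D_\delta^{r}D_\eta^{s}\mathcal{P}(\mathcal{V}).
\]
Applying part (2) of the Proposition to the right-hand side gives
\[
D_\delta^{r}D_\eta^{s}\mathcal{P}(\mathcal{V})=\sum_{|\alpha|=d-r-s}\frac{d!\,p_\alpha(\delta,\eta)}{(d-r-s)!}\,B_\alpha^{d-r-s}(\lambda),
\]
with $p_\alpha(\delta,\eta)=\sum_{|\beta|=s}p_{\alpha+\beta}(\delta)B_\beta^{s}(\eta)$ and $p_{\alpha+\beta}(\delta)=\sum_{|\gamma|=r}b(\alpha+\beta+\gamma)B_\gamma^{r}(\delta)$. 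It then suffices to set
\[
b_\alpha^{r,s}(\delta,\eta):=\frac{d!}{(d-r-s)!}\,p_\alpha(\delta,\eta),
\]
which turns the last display into exactly \eqref{eqnmixte}; uniqueness of the coefficients $b_\alpha^{r,s}(\delta,\eta)$ is inherited from the uniqueness of the BB-form, since $\{B_\alpha^{d-r-s}\}_{|\alpha|=d-r-s}$ is a basis of $\mathbb{P}_{d-r-s}(\mathcal{K})$.

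I do not expect a genuine obstacle here: the statement is essentially a change of notation packaging Proposition~(2). The only points deserving care are the zero-sum barycentric convention for edge vectors (so that the Proposition applies verbatim and the quantities $B_\beta^{r}(\delta)$, $B_\gamma^{r}(\delta)$ make sense for $|\delta|=0$ rather than $|\delta|=1$) and the appeal to the symmetry of higher derivatives that lets one split $D^{r+s}$ into the block $D_\delta^{r}$ followed by the block $D_\eta^{s}$. The hypothesis that $\delta$ and $\eta$ be \emph{different} directions plays no role in the identity itself, but I would flag that it is what will later guarantee the independence of the associated linear functionals when assembling the unisolvent set of degrees of freedom of the finite element.
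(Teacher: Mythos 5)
The paper itself states this corollary without proof, as a recalled standard property of the BB-form, so there is no in-paper argument to compare against; your route --- reduce to part~(2) of the Proposition via the symmetry of higher derivatives and the zero-sum barycentric convention for edge vectors --- is the natural derivation and the first half of it is sound. However, as written your argument proves the display \eqref{eqnmixte} only for \emph{your} definition $b_\alpha^{r,s}(\delta,\eta):=\frac{d!}{(d-r-s)!}\,p_\alpha(\delta,\eta)$, whereas the corollary in the paper comes with the explicit finite-difference formula
\begin{equation*}
b_{\alpha}^{r,s}\left( \delta,\eta\right) = \frac{d!}{\left( d-r-s \right)!}\left( \Delta_{kl}^{r} \Delta_{kn}^{s}b\left( \alpha\right) \right),
\qquad \Delta_{ij}b\left(\alpha\right)=b\left(\alpha+\epsilon_{j}\right)-b\left(\alpha+\epsilon_{i}\right),
\end{equation*}
and that identification is precisely the content of the corollary beyond Proposition~(2). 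You therefore still owe the computation that for $\delta=\overrightarrow{A_{k}A_{l}}=\epsilon_{l}-\epsilon_{k}$ one has $p_{\alpha}(\delta)=\Delta_{kl}^{r}b(\alpha)$: since $\delta_{j}=0$ for $j\neq k,l$, only multi-indices $\beta=(r-j)\epsilon_{k}+j\epsilon_{l}$ survive in $p_{\alpha}(\delta)=\sum_{|\beta|=r}b(\alpha+\beta)\frac{r!}{\beta!}\delta^{\beta}$, giving $\sum_{j=0}^{r}\binom{r}{j}(-1)^{r-j}b(\alpha+(r-j)\epsilon_{k}+j\epsilon_{l})$, which is exactly the binomial expansion of $\Delta_{kl}^{r}b(\alpha)$; iterating with $\eta=\overrightarrow{A_{k}A_{n}}$ then yields $p_{\alpha}(\delta,\eta)=\Delta_{kl}^{r}\Delta_{kn}^{s}b(\alpha)$. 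This is routine, but without it the corollary's stated coefficient formula is asserted rather than proved. Your closing remarks (uniqueness from the BB-basis, and the role of the ``different directions'' hypothesis being deferred to unisolvence) are correct and do not affect the argument.
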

with
\begin{equation*}
b_{\alpha}^{r,s}\left( \delta,\eta\right) = \frac{d!} {\left( d-r-s \right)!}\left( \Delta_{kl}^{r} \Delta_{kn}^{s}b\left( \alpha\right)  \right)
\end{equation*}
where $ \Delta_{ij}b\left(  \alpha\right)  =b\left(  \alpha
+\epsilon_{j}\right)  -b\left(  \alpha+\epsilon_{i}\right) $ and $\epsilon_{k}$ being the $k^{th}$ vector in the ${\R}^{3}$ %
canonical basis, $k=1,2,3$.

\begin{theorem}[\textbf{On $C^{r}$ Continuity}] \label{theocont}
Let $\mathcal{K}=\langle A,B,C\rangle $ and
$\widehat{\mathcal{K}} = \langle\widehat{C},B,A\rangle $
be two triangles sharing the same edge $ \left[  AB\right]. $ Given $\lambda $ and $\widehat{\lambda} $ the barycentric coordinates with respect to $ \mathcal{K} $ and $\widehat{\mathcal{K}}$ .\\
Let
$ \mathcal{P}\left( \mathcal{V}\right) = \sum \limits_{\left\vert\alpha\right\vert=d} b_{\alpha}B_{\alpha}^{d}\left( \lambda\right)$ \, and \, $ \widehat{ \mathcal{P}}\left( \mathcal{V}\right) =\sum \limits_{\left\vert \alpha\right\vert=d} \widehat{b}_{\alpha} B_{\alpha} ^{d}(\widehat{\lambda}) $ \; be two polynomials of degree  $d$ respectively defined on  $\mathcal{K} $ and  $ \widehat{ \mathcal{K}} $.
\paragraph{ }  The \textbf{$C^{r}$ continuity of $\mathcal{P}$ and $\widehat{ \mathcal{P}}$} with respect to the edge $\left[  AB\right]  $ is satisfied if and only if for $0\leq k\leq r$ and $0\leq\rho\leq d-k$%
\begin{equation} \label{eqncont}%
\widehat{b}_{k,\rho,d-k-\rho}=b_{d-k-\rho,\rho,0}^{k}\left(  \alpha\right)
\end{equation}
where $\left.  b_{\mu}^{k}\left(  \alpha\right)  \right.  $ is defined by the following recurrence formula :
For $k\geq1$ and $\left\vert \mu\right\vert =d-k$
\[
\left\{
\begin{array}[l]{l}%
b_{\mu}^{0}\left(  \alpha\right)  =b_{\mu}\\
b_{\mu}^{k}\left(  \alpha\right)  =\sum\limits_{i=1}^{3}\alpha_{i}%
b_{\mu+\epsilon_{i}}^{k-1}\left(  \alpha\right)
\end{array}
\right.
\]
$ \alpha=\widehat{\lambda}(\widehat{C)}$ being the barycentric coordinates of $\widehat{C}$ in $\mathcal{K}$ and $\epsilon_{k}$ the $k^{th}$ vector in the ${\R}^{3}$ canonical basis, %
$ k~=1,2,3$.~\cite{Farin4}.
\end{theorem}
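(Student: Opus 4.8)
The plan is to convert the geometric $C^{r}$-join condition into an algebraic statement about B-coefficients, and then to recognise the resulting quantities as the output of the de Casteljau recurrence obtained by subdividing $\mathcal{K}$ with the point $\widehat{C}$.

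First I would fix the two barycentric systems, $\lambda$ on $\mathcal{K}=\langle A,B,C\rangle$ and $\widehat{\lambda}$ on $\widehat{\mathcal{K}}=\langle\widehat{C},B,A\rangle$, and write $\alpha=\widehat{\lambda}(\widehat{C})$ for the barycentric coordinates of $\widehat{C}$ in $\mathcal{K}$. Substituting $\widehat{C}=\alpha_{1}A+\alpha_{2}B+\alpha_{3}C$ into $\mathcal{V}=\widehat{\lambda}_{1}\widehat{C}+\widehat{\lambda}_{2}B+\widehat{\lambda}_{3}A$ gives $\lambda_{3}=\alpha_{3}\widehat{\lambda}_{1}$; since $\widehat{\mathcal{K}}$ is non-degenerate, $\alpha_{3}\neq 0$, so the common edge is $[AB]=\{\lambda_{3}=0\}=\{\widehat{\lambda}_{1}=0\}$. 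Because both pieces are polynomials, the $C^{r}$-join across $[AB]$ is equivalent to requiring that $D^{k}_{\delta}\mathcal{P}$ and $D^{k}_{\delta}\widehat{\mathcal{P}}$ agree on $[AB]$ for $k=0,\dots ,r$ and one fixed direction $\delta$ transversal to $[AB]$ (tangential derivatives transmit automatically once the traces agree). Since a univariate polynomial vanishing on the segment $[AB]$ vanishes on the whole line carrying it, this says that $\widehat{\mathcal{P}}-\mathcal{P}$, viewed as polynomials on $\R^{2}$, vanishes to order $r+1$ along that line, i.e. $\widehat{\lambda}_{1}^{\,r+1}$ divides $\widehat{\mathcal{P}}-\mathcal{P}$.

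Next I would translate this divisibility into B-coefficients taken with respect to $\widehat{\mathcal{K}}$. Writing $\widehat{\mathcal{P}}-\mathcal{P}=\sum_{|\nu|=d}(\widehat{b}_{\nu}-\beta_{\nu})B^{d}_{\nu}(\widehat{\lambda})$, where $\beta_{\nu}$ is the B-coefficient at the index $\nu$ of $\mathcal{P}$ regarded as an element of $\mathbb{P}_{d}(\widehat{\mathcal{K}})$, one checks that $\widehat{\lambda}_{1}^{\,r+1}$ divides this polynomial iff $\widehat{b}_{\nu}=\beta_{\nu}$ for every $\nu$ with $\nu_{1}\leq r$. Indeed, expanding in the affine coordinates $(\widehat{\lambda}_{1},\widehat{\lambda}_{2})$ and differentiating repeatedly in $\widehat{\lambda}_{1}$ at $\widehat{\lambda}_{1}=0$, each successive trace is a univariate Bernstein polynomial whose coefficients, after the previously treated layers have been forced to vanish, are exactly the $(\widehat{b}_{\nu}-\beta_{\nu})$ on the next layer; equivalently this is the Cross derivatives Proposition applied to the transversal direction $\delta$, the $k$-th such derivative on the edge depending only on B-coefficients whose first index is $\leq k$. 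So the $C^{r}$ condition is equivalent to $\widehat{b}_{\nu_{1},\nu_{2},\nu_{3}}=\beta_{\nu_{1},\nu_{2},\nu_{3}}$ for $\nu_{1}\leq r$.

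The remaining step, and the main obstacle, is to identify $\beta_{\nu_{1},\nu_{2},\nu_{3}}$ with $b^{\nu_{1}}_{\nu_{3},\nu_{2},0}(\alpha)$; this is a bookkeeping-heavy computation because of the orientation reversal $\langle\widehat{C},B,A\rangle$. The clean route uses the polar form (blossom) $\mathrm{bl}_{\mathcal{P}}$ of $\mathcal{P}$: the B-coefficient of a degree-$d$ polynomial with respect to a triangle $\langle P,Q,R\rangle$ at $(i,j,k)$ is $\mathrm{bl}_{\mathcal{P}}(P^{i},Q^{j},R^{k})$, hence $\beta_{\nu_{1},\nu_{2},\nu_{3}}=\mathrm{bl}_{\mathcal{P}}(\widehat{C}^{\,\nu_{1}},B^{\nu_{2}},A^{\nu_{3}})$. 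Replacing one $\widehat{C}$-slot at a time by $\alpha_{1}A+\alpha_{2}B+\alpha_{3}C$ and using multiaffinity and symmetry shows, by induction on $\nu_{1}$, that $\mathrm{bl}_{\mathcal{P}}(\widehat{C}^{\,\nu_{1}},A^{\nu_{3}},B^{\nu_{2}})$ obeys precisely $b^{0}_{\mu}=b_{\mu}$, $b^{k}_{\mu}(\alpha)=\sum_{i}\alpha_{i}\,b^{k-1}_{\mu+\epsilon_{i}}(\alpha)$; the base case $\nu_{1}=0$ is just the fact that the trace of $\mathcal{P}$ on $[AB]$ has the same univariate Bernstein coefficients read from either triangle, the flip sending the $\mathcal{K}$-index $(\nu_{3},\nu_{2},0)$ to the $\widehat{\mathcal{K}}$-index $(0,\nu_{2},\nu_{3})$. (One can also carry out this step purely through the subdivision and degree-raising identities, at the cost of a longer induction, which matches the methodology announced in the introduction.) Combining with the previous paragraph and setting $\nu_{1}=k$, $\nu_{2}=\rho$, $\nu_{3}=d-k-\rho$ yields \eqref{eqncont}; everything outside the last identification is either the definition of $C^{r}$-smoothness for piecewise polynomials or standard Bernstein-basis algebra.
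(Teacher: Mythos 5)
The paper never proves this theorem: it is recalled as a classical result and attributed to Farin, so there is no in-paper argument to measure yours against. Taken on its own terms, your proof is correct and is essentially the standard polar-form derivation of the $C^{r}$ smoothness conditions. The reduction of the $C^{r}$ join to agreement of cross-derivatives in a single transversal direction, hence to divisibility of $\widehat{\mathcal{P}}-\mathcal{P}$ by $\widehat{\lambda}_{1}^{\,r+1}$, is sound (the computation $\lambda_{3}=\alpha_{3}\widehat{\lambda}_{1}$ with $\alpha_{3}\neq 0$ correctly identifies the shared edge, and the layer-by-layer argument that divisibility forces $\widehat{b}_{\nu}=\beta_{\nu}$ exactly for $\nu_{1}\leq r$ is standard Bernstein algebra). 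The decisive step, namely $\beta_{k,\rho,d-k-\rho}=\mathrm{bl}_{\mathcal{P}}(\widehat{C}^{\,k},B^{\rho},A^{d-k-\rho})$ followed by expanding each $\widehat{C}$-slot as $\alpha_{1}A+\alpha_{2}B+\alpha_{3}C$ by multiaffinity, reproduces precisely the recurrence $b_{\mu}^{k}(\alpha)=\sum_{i}\alpha_{i}b_{\mu+\epsilon_{i}}^{k-1}(\alpha)$ with base case $b_{\mu}^{0}=b_{\mu}$, and your index bookkeeping through the orientation reversal $\langle\widehat{C},B,A\rangle$ (sending $\nu=(k,\rho,d-k-\rho)$ to the $\mathcal{K}$-index $(d-k-\rho,\rho,0)$) checks out. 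The only caveat is that you lean on two blossom facts the paper never introduces (existence and uniqueness of the symmetric multiaffine polar form, and the dual-functional property identifying B-coefficients as blossom values at vertex arguments); if you wanted an argument in the spirit of the tools this paper actually develops, the same identification can be obtained by iterating the subdivision formula of the kind proved in its Proposition on the subdivision algorithm, at the cost of a longer induction, as you yourself note. No genuine gap.
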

\section{\texorpdfstring{$C^{2} $}{Lg} class finite elements}

\subsection{Expression of B\'{e}zier coefficients with respect to initial conditions}
\paragraph{ }  In the remainder of the paper, we consider a triangle
$ \mathcal{K}=\langle A_{1},A_{2},A_{3}\rangle $  which is split into 3 subtriangles $ \mathcal{K}_{i}=\langle A_{i+2},A_{0},A_{i+1} \rangle $ from the centroid $A_{0}$ and we will use the \textbf{modulo 3} congruence for $i>0 $.  \\ %
On $\mathcal{K} $, we define the following set of degrees of freedom : %
\begin{eqnarray} \label{eqndeglib}%
\sum\nolimits_{\mathcal{K}_{i}}^{2} &=& \lbrace f\left(  A_{0}\right)  ;\partial^{r}f\left( A_{i0}\right)  ,\partial^{r}f\left(  A_{i4} \right),r\leq2; \\
&& D^{2}f \left( A_{i1}\right)\cdot{
\overrightarrow{ A_{0} A_{i4} } }^{2},
D^{2}f\left( A_{i3}\right) \cdot{\overrightarrow{A_{0}A_{i0}}}^{2},\notag \\
&& Df\left( A_{i2}\right)\cdot\overrightarrow{ A_{0} A_{i2}} \rbrace \notag
\end{eqnarray}
with $ A_{ij}=\left( 1-\frac{j}{4}\right) A_{i+1}+\frac{j}{4}
A_{i+2} $ for $j=0,1,2,3,4 $ and for $i=1,2,3. $ \\
Let us consider the following space of polynomial functions :
\begin{equation} \label{eqnespfonc}%
\mathbb{PP}_{5}\left(  \mathcal{K}\right)  \mathbb{=}\left\{ S\in
C^{0}\left(  \mathcal{K}\right) : S|_{\mathcal{K}_{i}} \in \mathbb{P}_{5}, \, i=1,2,3\right\}
\end{equation}
The triplet $\left(  \mathcal{K}\text{\quotesinglbase } \mathbb{PP}_{5}\left( \mathcal{K}\right)  \text{\quotesinglbase }%
{\textstyle\sum\nolimits_{\mathcal{K}}^{2}} \right)  $\, where \, %
$\textstyle\sum\nolimits_{\mathcal{K}}^{2}= \bigcup\limits_{i=1}^{3} \sum\nolimits_{\mathcal{K}_{i}}^{2} $ \, is a finite element by construction. It consists on 16 local degrees of freedom on each of the subtriangle $\mathcal{K}_{i}$ as depicted on Figure~\ref{figure:fig1}.%
\begin{figure} [!htbp] \centering
\includegraphics[scale=0.25]{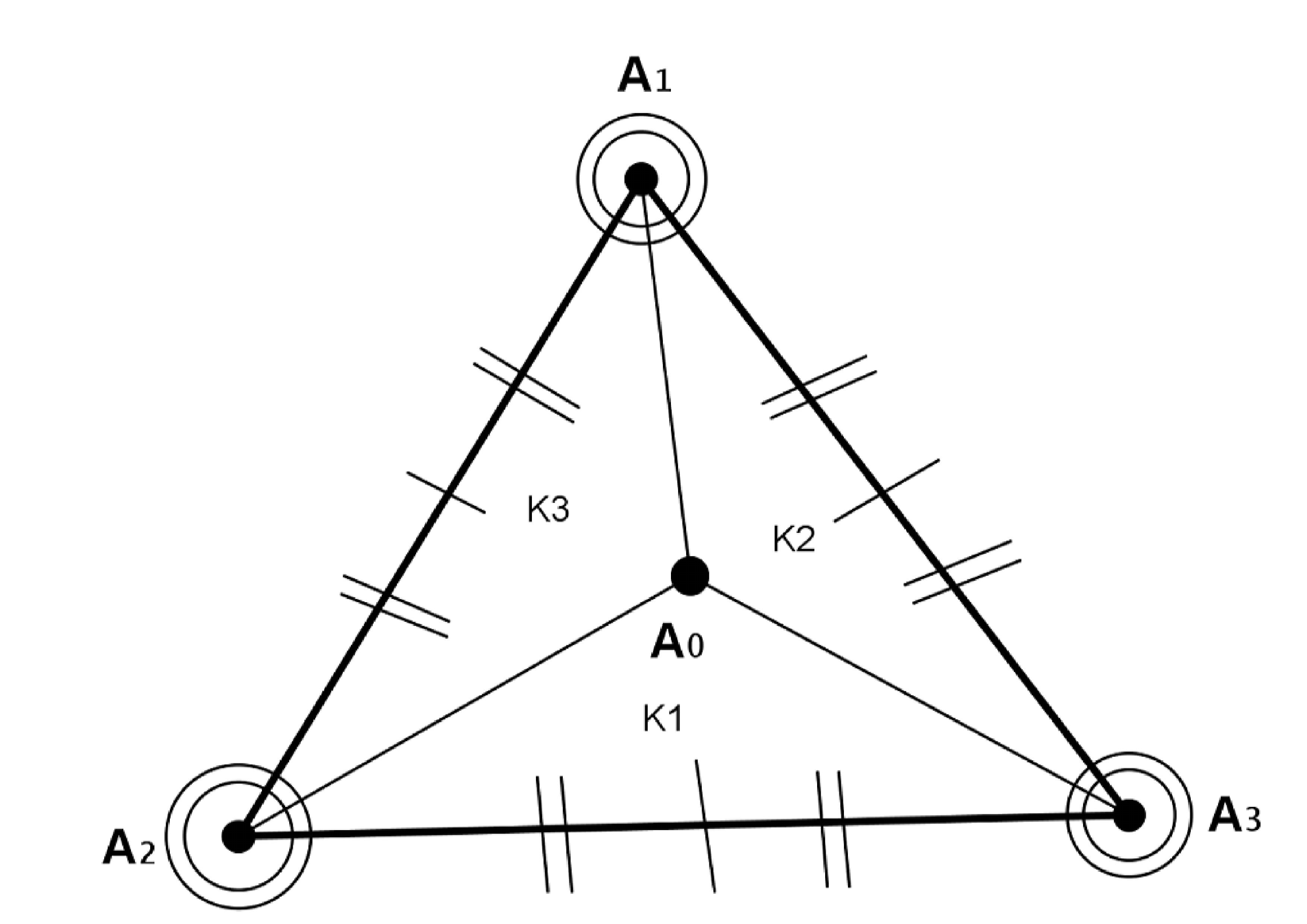}
\caption{16 degrees of freedom on $\mathcal{K}_{i} ,i=1,2,3.$}
\label{figure:fig1}
\end{figure}
\paragraph{ }  We notice that on each subtriangle, the 16 degrees of freedom are not sufficient to characterize a polynomial of degree 5. In fact, $ \dim\left(  \mathbb{P}_{5}\right)  =21 $ while $card\left( {\textstyle\sum\nolimits_{\mathcal{K}_{i}}^{2}}\right) =16 $. %
So it remains 5 indetermined coefficients on each of the subtriangle.
\paragraph{ } For a sufficiently regular function $ f \in \mathbb{PP}_{5}\left( \mathcal{K}\right)\cap C^{2}\left( \mathcal{K} \right), $ let define for $ i=1,2,3, $ \; $ \mathcal{P}_{i} \in \mathbb{P}_{5}\left(\mathcal{K}_{i}\right) $ such that %
\begin{equation} \label{eqnbbi}%
\mathcal{P}_{i}\left( \mathcal{V}\right)  =\sum\limits_{\left\vert
\beta\right\vert =5}C_{i}\left( \beta \right)  B_{\beta}^{5}\left(
\lambda_{i} \right)
\end{equation}
The 16 known B-coefficients on each $\mathcal{K}_{i} $ are given as follow:
\begin{equation*}
\left.
\begin{array}{r c l}
C_{i}\left(  5,0,0\right)   &  =& f\left(  A_{i4}\right),  \\
\\
C_{i}\left(  4,1,0\right)   &  =& \frac{1}{5}Df\left( A_{i4}\right)
\cdot \overrightarrow{A_{i4}A_{0}} + f\left( A_{i4}\right),  \\
\\
C_{i}\left(  4,0,1\right)   &  =& \frac{1}{5}Df\left( A_{i4}\right)
\cdot \overrightarrow{A_{i4}A_{i0}} + f\left( A_{i4}\right),  \\
\\
C_{i}\left(  3,2,0\right)   &  =& \frac{1}{20}D^{2}f\left(A_{i4} \right) \cdot {\overrightarrow{A_{i4}A_{0}}}^{2} + \frac{2}{5}Df\left(  A_{i4}\right)\cdot \overrightarrow{A_{i4}A_{0}}+ f\left(A_{i4} \right),\\
\\
C_{i}\left(  3,0,2\right)   &  =& \frac{1}{20}D^{2}f\left(A_{i4} \right) \cdot {\overrightarrow{A_{i4}A_{i0}}}^{2}+ \frac{2}{5}Df\left(  A_{i4}\right)\cdot \overrightarrow{A_{i4}A_{i0}}+ f\left(A_{i4} \right), 
\end{array}
\right.
\end{equation*}
\begin{equation*}
\left.
\begin{array}{r c l}
C_{i}\left(  3,1,1\right)   &  =& \frac{1}{20}D^{2}f\left(A_{i4} \right) \cdot \left( \overrightarrow{A_{i4}A_{0}},\overrightarrow{A_{i4}A_{i0}} \right) + \frac{1}{5}Df\left( A_{i4} \right) \cdot \overrightarrow{A_{i4}A_{0}} \\
  &  &  + \frac{1}{5} Df\left(  A_{i4} \right) \cdot \overrightarrow{A_{i4}A_{i0}} + f\left(A_{i4} \right) \\
  \\ 
C_{i}\left(  0,0,5\right)   &  =& f\left(  A_{i0}\right),  \\
\\
C_{i}\left( 0,1,4 \right)   &  =& \frac{1}{5}Df\left( A_{i0} \right) \cdot \overrightarrow{A_{i0}A_{0}} + f\left( A_{i0}\right),  \\
\\
C_{i}\left( 1,0,4 \right)   &  =& \frac{1}{5}Df\left( A_{i0} \right) \cdot \overrightarrow{A_{i0}A_{i4}} + f\left( A_{i0}\right),  \\
\\
C_{i}\left( 0,2,3 \right)   &  =& \frac{1}{20}D^{2}f\left(A_{i0} \right) \cdot {\overrightarrow{A_{i0}A_{0}}}^{2} + \frac{2}{5}Df\left(  A_{i0}\right)\cdot \overrightarrow{A_{i0}A_{0}}+ f\left(A_{i0} \right),  \\
\\
C_{i}\left( 2,0,3 \right)   &  =& \frac{1}{20}D^{2}f\left(A_{i0} \right) \cdot {\overrightarrow{A_{i0}A_{i4}}}^{2}+ \frac{2}{5}Df\left(  A_{i0}\right)\cdot \overrightarrow{A_{i0}A_{i4}}+ f\left(A_{i0} \right),  \\
\\
C_{i}\left( 1,1,3 \right)   &  =& \frac{1}{20}D^{2}f\left(A_{i0} \right) \cdot \left( \overrightarrow{A_{i0}A_{0}},\overrightarrow{A_{i0}A_{i4}} \right) + \frac{1}{5}Df\left( A_{i0} \right) \cdot \overrightarrow{A_{i0}A_{4}} \\
  &  &  + \frac{1}{5} Df\left(  A_{i0} \right) \cdot \overrightarrow{A_{i0}A_{0}} + f\left(A_{i0} \right)\\
\\
C_{i}\left( 2,1,2 \right)   &  =& \frac{8}{15}Df\left(A_{i2} \right) \cdot \overrightarrow{A_{i2}A_{0}} + \frac{1}{12}f\left(A_{i4}\right)+ \frac{1}{12}f\left(A_{i0}\right) \\
&  & + \frac{5}{12}C_{i}\left( 4,0,1 \right)+ \frac{5}{12}C_{i} \left( 1,0,4 \right)- \dfrac{1}{6}C_{i}\left(4,1,0\right)- \frac{1}{6} C_{i}\left( 0,1,4 \right),  \\
&  & - \frac{2}{3}C_{i}\left( 3,1,1 \right)- \dfrac{2}{3}C_{i} \left( 1,1,3 \right)+ \frac{5}{6}C_{i}\left(3,0,2 \right)+ \frac{5}{6} C_{i} \left( 2,0,3 \right)\\
C_{i}\left( 1,2,2 \right) &  =& -\frac{2}{45} D^{2}f \left(A_{i1} \right) \cdot {\overrightarrow{A_{0}A_{i4}}}^{2} -\frac{4}{15}Df\left(A_{i2} \right) \cdot \overrightarrow{A_{0}A_{i2}} -\frac{13}{12}f\left( A_{i0} \right) \\
&  & + \frac{2}{15}D^{2}f\left(A_{i3} \right) \cdot {\overrightarrow{A_{0}A_{i0}}}^{2} + \frac{5}{12}f \left(A_{i4} \right)- \frac{5}{6}C_{i}\left( 4,1,0 \right) \\
&  & + \frac{7}{12} C_{i} \left( 4,0,1 \right) + \frac{1}{3} C_{i} \left( 3,2,0 \right)- C_{i}\left( 3,1,1 \right)  \\
&  &  + \frac{1}{2}C_{i} \left( 3,0,2 \right) + \frac{1}{18}C_{i} \left( 2,0,3 \right) + \frac{17}{9}C_{i}\left( 1,1,3 \right)  \\
&  & -\frac{11}{12}C_{i} \left(1,0,4 \right) - \frac{10}{9}C_{i} \left(0,2,3\right) + \frac{13}{6} C_{i} \left( 0,1,4 \right)\\
C_{i}\left( 2,2,1 \right) &  =& -\frac{2}{45} D^{2}f \left(A_{i3} \right) \cdot {\overrightarrow{A_{0}A_{i0}}}^{2} -\frac{4}{15}Df \left(A_{i2}\right) \cdot \overrightarrow{A_{0}A_{i2}}-\frac{13}{12}f \left( A_{i4} \right) \\
&  & + \frac{2}{15}D^{2}f\left(A_{i1} \right) \cdot {\overrightarrow{A_{0}A_{i4}}}^{2} + \frac{5}{12}f \left(A_{i0} \right)- \dfrac{5}{6}C_{i}\left( 0,1,4 \right) \\
&  & + \frac{1}{3} C_{i} \left( 0,2,3 \right) + \frac{7}{12} C_{i} \left( 1,0,4 \right) - C_{i}\left( 1,1,3 \right)  \\
&  &  + \frac{1}{2}C_{i} \left( 2,0,3 \right) + \frac{1}{18}C_{i} \left( 3,0,2 \right) + \frac{17}{9}C_{i}\left( 3,1,1 \right)  \\
&  & - \frac{10}{9}C_{i} \left(3,2,0\right) - \frac{11}{12}C_{i} \left(4,0,1 \right) + \frac{13}{6} C_{i} \left( 4,1,0 \right) \\
C_{i}\left( 0,5,0 \right) &  =& f \left( A_{0} \right)
\end{array}
\right.
\end{equation*}
Then, the 5 B-coefficients to determine on each subtriangle
$\mathcal{K}_{i}$ are the following :%
\[
C_{i}\left(  0,3,2\right) \text{, }C_{i}\left(  0,4,1\right) \text{, }
C_{i}\left(  1,3,1\right)  \text{, }C_{i}\left(  1,4,0\right)  \text{, } C_{i}\left(  2,3,0\right)
\]
\paragraph{ } Finally, the polynomial $\left.  \mathcal{P\in}\text{ }\mathbb{PP}_{5}\left( \mathcal{K}\right) \right. $ has 37 out of 46 coefficients known. The 9 indetermined coefficients taking into account the $C^{0}$ continuity along the edge $ \left[ A_{0} A_{i} \right]  $ for $ i= 1,2,3, $ are represented on Figure~\ref{figure:fig2} below by crosses. The full circles come from conditions on  derivatives at the summits and the hollow circles are obtained from conditions on directional derivatives with respect to non-colinear directions on the exterior boundaries of the triangle $\mathcal{K} $.%
\begin{figure} [!htbp] \centering
\includegraphics[scale=0.18]{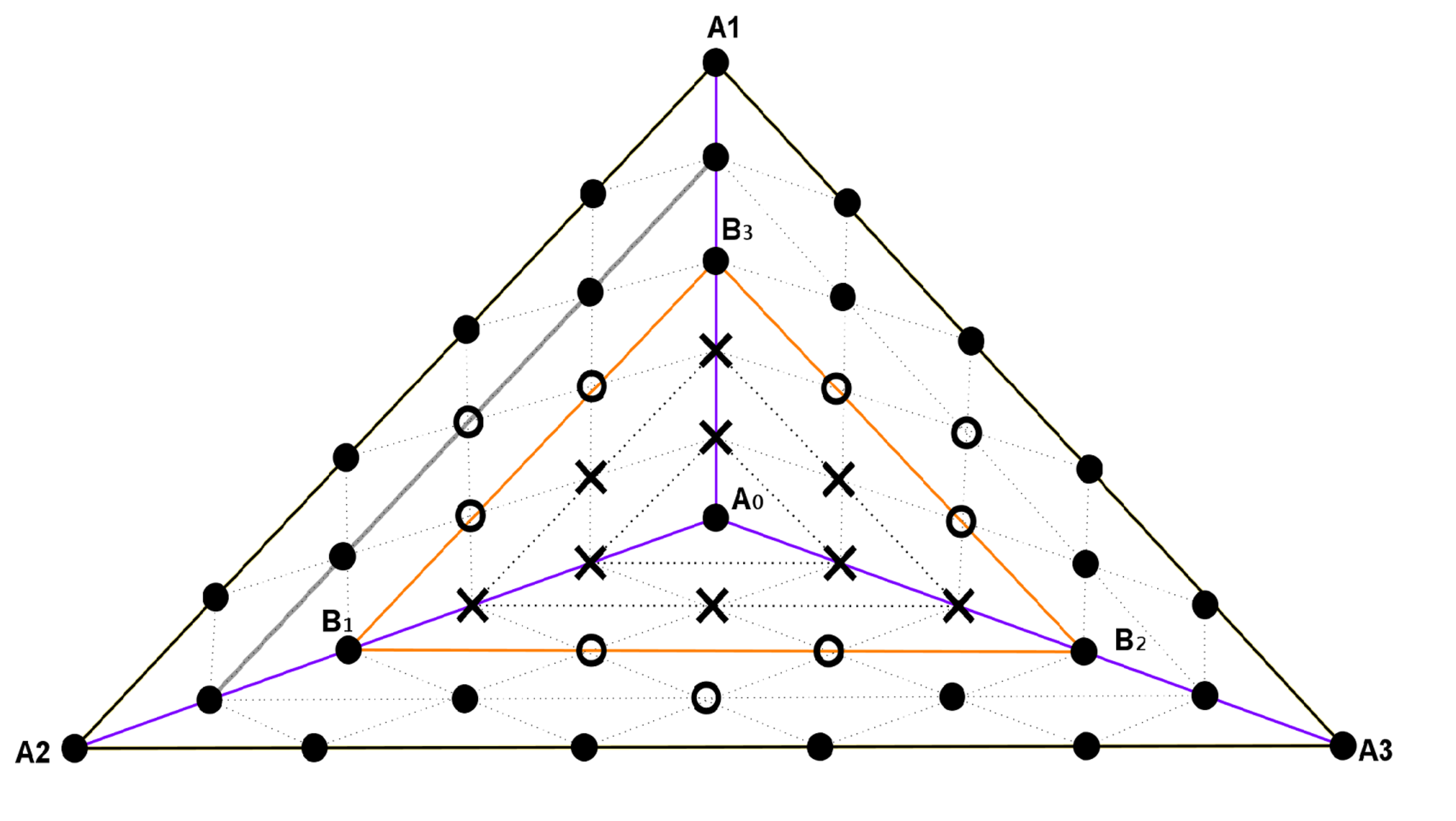}
\caption{Associated B\'{e}zier coefficients}
\label{figure:fig2}
\end{figure}
\subsection{Determination of the unknown coefficients of B\'{e}zier}
\paragraph{ } Now we propose a computation method of the five (05) B-coefficients remaining on each sub triangle $\mathcal{K}_{i}$.
\paragraph{ } To this end, let consider the set $\left. \widetilde{ \mathcal{K}}_{0} = \widetilde{\mathcal{K}} \cup\left\{ A_{0}\right\} \right. $ where $ \widetilde{\mathcal{K}}=\langle B_{1},B_{2},B_{3} \rangle $ with vertices $ B_{i}=\frac{2}{5}A_{0}+\frac{3}{5} A_{i+1}$ \\ %
for $i=1,2,3 $ and $\left. \mathbb{P}_{3}\left(  \widetilde {\mathcal{K}}_{0}\right) \right. $ the space of polynomial functions of degree less or equal to 3 defined on $\widetilde {\mathcal{K}}_{0}$. Let consider the set of degrees of freedom: %
\[
{\sum\nolimits_{\widetilde{\mathcal{K}}_{0}}}=\left\{f \left( A_{0} \right) \right\} \cup \left\{ f\left( B_{iij}\right) \right\}_{1\leq i,j \leq3}
\]%
where $ B_{iij}=\frac{2}{3}B_{i}+\frac{1}{3}B_{j} $. \\
The triplet $\left(  \widetilde{\mathcal{K}}_{0}, \mathbb{P}_{3}
(\widetilde{\mathcal{K}}_{0}), {\textstyle \sum \nolimits_{\widetilde{\mathcal{K}}_{0}}} \right)  $ is a Lagrange's finite element of type $3. $ ~\cite{Raviart6}.
\paragraph{ } For $ i>0 $ such that $ i\equiv 0 \pmod{3} $ , let define $ \widetilde{\mathcal{K}}_{i}=\langle B_{i+2} , A_{0} , B_{i+1} \rangle  $ a subdivision of $\widetilde{\mathcal{K}}$ into 3 subtriangles around the centroid $ A_{0} $. %
\begin{proposition}[\textbf{Algorithm of subdivision 1}]
Let $\sigma$\ be a 3-cycle, $ \gamma_{0}$\ the barycentric coordinates of $A_{0}$\ in $\widetilde{\mathcal{K}} $ and let define $\widetilde{\mathcal{P}} \in \mathbb{P}_{3}(\widetilde{\mathcal{K}}_{0})$ such that
\begin{equation} \label{eqnpol1}%
\widetilde{\mathcal{P}}\left(  \mathcal{V}\right)  = \sum\limits_{\left\vert \alpha\right\vert =3}\widetilde{b}\left( \alpha\right)B_{\alpha}^{3}\left( \gamma\right)
\end{equation}
with $\gamma=\left( \gamma_{1},\gamma_{2},\gamma_{3}\right) $ the barycentric coordinates in $\widetilde{\mathcal{K}}$. \\%
For $\left.  i=1,2,3, \right. $\, let define $\left.
\widetilde{\mathcal{P}}_{i}\in\mathbb{P}_{3}\right. $ such that $\left. \widetilde{\mathcal{P}}_{i}=\widetilde{\mathcal{P}}|_{\widetilde{\mathcal{K} }_{i}}\right.  $\ and $\rho=\left(  \mathcal{\rho}_{1},\mathcal{\rho}_{2},\mathcal{\rho}_{3}\right)  \in{\N}^{3}$ with $\left\vert \rho\right\vert =3 $.

If %
\begin{equation} \label{eqnpol2}%
\widetilde{\mathcal{P}}_{i}\left(  \mathcal{V}\right)  =\sum
\limits_{\left\vert \rho\right\vert =3}\widetilde{b}_{i}\left(  \rho\right)B_{\rho}^{3}\left(  \gamma_{i}\right)
\end{equation}
with $\gamma_{i}=\left(  \gamma_{i1},\gamma_{i2},\gamma_{i3}\right)  $\, the barycentric coordinates in $\widetilde{\mathcal{K}}_{i} $,\\ %
then %
\begin{equation} \label{eqnpol3}%
\widetilde{b}_{i}\left(  \rho\right)  =\sum\limits_{\left\vert \alpha
\right\vert =\rho_{2}}\widetilde{b}\left(  \alpha+\sigma^{i}\left(  \rho^{0}\right) \right)B_{\alpha}^{\rho_{2}}\left( \gamma_{0}\right)
\end{equation}
where $\rho^{0}=\left(\mathcal{\rho}_{3},\mathcal{\rho}_{1},0\right)$.
\end{proposition}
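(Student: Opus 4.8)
The plan is to recognise \eqref{eqnpol3} as the cubic de~Casteljau subdivision rule for a B\'ezier triangle, written in the notation of the paper, and to prove it through the polar‑form (blossoming) description of the B‑coefficients. Recall the two classical facts that make this essentially immediate: every $\widetilde{\mathcal P}\in\mathbb P_3$ has a unique symmetric $3$‑affine polar form $\mathsf b[\cdot,\cdot,\cdot]$ with $\mathsf b[\mathcal V,\mathcal V,\mathcal V]=\widetilde{\mathcal P}(\mathcal V)$; and for any non‑degenerate triangle $\langle V_1,V_2,V_3\rangle$ the B‑coefficient of $\widetilde{\mathcal P}$ relative to it at a multi‑index $\mu$ with $|\mu|=3$ equals $\mathsf b[V_1^{(\mu_1)},V_2^{(\mu_2)},V_3^{(\mu_3)}]$, the blossom evaluated with $V_j$ repeated $\mu_j$ times. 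Since $\widetilde{\mathcal P}_i=\widetilde{\mathcal P}|_{\widetilde{\mathcal K}_i}$ agrees with $\widetilde{\mathcal P}$ as a polynomial, the coefficients $\widetilde b_i(\rho)$ of \eqref{eqnpol2} are precisely the B‑coefficients of $\widetilde{\mathcal P}$ relative to $\widetilde{\mathcal K}_i=\langle B_{i+2},A_0,B_{i+1}\rangle$.

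First I would write, using the polar form relative to $\widetilde{\mathcal K}_i$,
\[
\widetilde b_i(\rho)=\mathsf b\!\left[B_{i+2}^{(\rho_1)},\,A_0^{(\rho_2)},\,B_{i+1}^{(\rho_3)}\right].
\]
Next, since $A_0=\sum_{k=1}^{3}\gamma_{0k}B_k$ with $\gamma_0$ the barycentric coordinates of $A_0$ in $\widetilde{\mathcal K}$ (as in the statement), and $\mathsf b$ is affine and symmetric in each slot, I would expand the $\rho_2$ slots that carry $A_0$ by the multinomial theorem:
\[
\widetilde b_i(\rho)=\sum_{|\alpha|=\rho_2}\frac{\rho_2!}{\alpha!}\,\gamma_0^{\alpha}\;\mathsf b\!\left[B_{i+2}^{(\rho_1)},\,B_1^{(\alpha_1)},\,B_2^{(\alpha_2)},\,B_3^{(\alpha_3)},\,B_{i+1}^{(\rho_3)}\right].
\]
Here $\tfrac{\rho_2!}{\alpha!}\gamma_0^{\alpha}=B_\alpha^{\rho_2}(\gamma_0)$ by the definition of the Bernstein polynomial, and the remaining polar value is the B‑coefficient of $\widetilde{\mathcal P}$ \emph{relative to $\widetilde{\mathcal K}$} at the multi‑index $\alpha$ augmented by $\rho_1$ in the slot of $B_{i+2}$ and $\rho_3$ in the slot of $B_{i+1}$, that is, at $\alpha+\rho_1\epsilon_{i+2}+\rho_3\epsilon_{i+1}$ with all subscripts taken modulo $3$. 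This is already \eqref{eqnpol3}, provided one verifies $\rho_1\epsilon_{i+2}+\rho_3\epsilon_{i+1}=\sigma^{i}(\rho^0)$ with $\rho^0=(\rho_3,\rho_1,0)$.

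I expect this last identification to be the only genuine bookkeeping in the argument. Taking $\sigma$ to be the $3$‑cycle $(x_1,x_2,x_3)\mapsto(x_3,x_1,x_2)$ fixed by the labelling, one checks coordinate‑by‑coordinate that $\sigma^{i}(\rho_3,\rho_1,0)$ places $\rho_1$ in position $i+2$, $\rho_3$ in position $i+1$ and $0$ in position $i$ (mod $3$): it equals $(0,\rho_3,\rho_1)$ for $i=1$, $(\rho_1,0,\rho_3)$ for $i=2$, and $(\rho_3,\rho_1,0)$ for $i\equiv0$, matching in each case the positions occupied by $B_{i+2}$ and $B_{i+1}$ in $\widetilde{\mathcal K}_i$. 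The one thing to be careful about besides this is keeping the cyclic convention on $B_1,B_2,B_3$ consistent with $\widetilde{\mathcal K}_i=\langle B_{i+2},A_0,B_{i+1}\rangle$; everything else is the routine affine expansion above.

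If one wishes to avoid the polar form and argue purely with the tools of Section~2, the same identity comes out of a direct computation: from $\mathcal V=\gamma_{i1}B_{i+2}+\gamma_{i2}A_0+\gamma_{i3}B_{i+1}$ and $A_0=\sum_k\gamma_{0k}B_k$ one expresses each coordinate $\gamma_k$ in $\widetilde{\mathcal K}$ as a linear combination of the $\gamma_{ij}$; substituting into $\widetilde{\mathcal P}(\mathcal V)=\sum_{|\delta|=3}\widetilde b(\delta)\tfrac{3!}{\delta!}\gamma^{\delta}$, expanding by the multinomial theorem, and collecting the coefficient of $\gamma_{i1}^{\rho_1}\gamma_{i2}^{\rho_2}\gamma_{i3}^{\rho_3}$ gives, by the uniqueness of the BB‑form \eqref{eqnbb}, exactly $\tfrac{3!}{\rho!}\widetilde b_i(\rho)$, and simplifying the resulting sum produces \eqref{eqnpol3}. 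This route only replaces the blossom by heavier multinomial bookkeeping, so I would present the polar‑form version as the proof.
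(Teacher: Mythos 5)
Your proof is correct, but it takes a genuinely different route from the paper. You prove the subdivision rule by blossoming: you identify $\widetilde b_i(\rho)$ with the polar value $\mathsf b\bigl[B_{i+2}^{(\rho_1)},A_0^{(\rho_2)},B_{i+1}^{(\rho_3)}\bigr]$, expand the $\rho_2$ slots carrying $A_0=\sum_k\gamma_{0k}B_k$ by multi-affinity and symmetry, and read off the remaining polar value as a B-coefficient of $\widetilde{\mathcal P}$ relative to $\widetilde{\mathcal K}$; the only residual work is the index bookkeeping, and your verification that $\rho_1\epsilon_{i+2}+\rho_3\epsilon_{i+1}=\sigma^i(\rho_3,\rho_1,0)$ checks out in all three cases against $\widetilde{\mathcal K}_i=\langle B_{i+2},A_0,B_{i+1}\rangle$ (for $i=3$ one needs $\sigma^3=\mathrm{id}$, which holds since $\sigma$ is a $3$-cycle). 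The paper instead proves the case $i=3$ by a direct computation — it writes $\gamma_1=\gamma_{33}+\tfrac13\gamma_{32}$, $\gamma_2=\gamma_{31}+\tfrac13\gamma_{32}$, $\gamma_3=\tfrac13\gamma_{32}$, expands $B_\nu^3(\gamma)$ by the binomial theorem into a double sum, reindexes, and invokes uniqueness of the BB-form — and then appeals to symmetry for $i=1,2$; this is essentially the ``heavier multinomial bookkeeping'' route you sketch in your last paragraph. Your version is shorter and more conceptual, and it handles all three values of $i$ uniformly rather than by a symmetry appeal, but it imports the polar-form formalism (existence and uniqueness of the symmetric $3$-affine blossom and its dual-functional property), which the paper never introduces; the paper's computation stays entirely within the Bernstein/BB-form toolkit of its Section~2. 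Both are complete proofs of the proposition.
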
%
\paragraph{ } It is sufficient to prove one of the cases due to the symetric property. Let us consider case $ i=3 $. %
\begin{proof}
$\left.  \forall\mathcal{V} \in \widetilde{\mathcal{K}}_{3}\right. $%
\begin{align*}
\mathcal{V}  &  \mathcal{=}\gamma_{31}B_{2}+\gamma_{32}A_{0}+\gamma
_{33}B_{1}\\
&  =\gamma_{31}B_{2}+\frac{1}{3}\gamma_{32}(B_{3}+B_{1}+B_{2}%
)+\gamma_{33}B_{1}\\
&  =(\gamma_{33}+\frac{1}{3}\gamma_{32})B_{1}+(\gamma_{31}+\frac{1}{3}\gamma_{32})B_{2}+\frac{1}{3}\gamma_{32}B_{3}\\
&  =\gamma_{1}B_{1}+\gamma_{2}B_{2}+\gamma_{3}B_{3}
\end{align*}
Let define $\left.  \nu=\left(  \delta,\beta,\xi\right)  \right.  $ a mulit-index of ${\N}$ such that $\left\vert \nu\right\vert =3 $
\begin{align*}
B_{\nu}^{3}\left(  \gamma\right) &  =\frac{3!}{\nu!}\gamma^{\nu}\\
&  =\frac{3!}{\delta!\beta!\xi!}\left(  \gamma_{33}+\frac{1}{3} \gamma_{32}\right)^{\delta}\left(\gamma_{31}+\frac{1}{3} \gamma_{32} \right)^{\beta}\left(\frac{1}{3}\gamma_{32}\right)^{\xi}\\
&  ={\sum\limits_{\mu=0}^{\beta}}{\sum\limits_{\eta=0}^{\delta}}
\frac{(3-\mu-\eta)!}{(\delta-\eta)!(\beta-\mu)!\xi!}\left(\frac{1}{3} \right)^{3-\mu-\eta}\\
& \times \frac{3!}{\mu!(3-\mu-\eta)!\eta!}\gamma_{31}^{\mu} \times \gamma_{32}^{3-\mu-\eta}\times\gamma_{33}^{\eta}\\
&  ={\sum\limits_{\mu=0}^{\beta}}{\sum\limits_{\eta=0}^{\delta}} B_{\delta-\eta,\beta-\mu,\xi}^{3-\mu-\eta}(\frac{1}{3},\frac{1}{3},\frac{1}{3}) B_{\mu,3-\mu-\eta,\eta}^{3}\left( \gamma_{3}\right)
\end{align*}
As $ \gamma_{0}=(\frac{1}{3},\frac{1}{3},\frac{1}{3}) $ and by posing $ {\rho}_{1}=\mu , {\rho}_{2}=3-\mu-\eta $ and ${\rho}_{3}=\eta\ $
we have $\left\vert \rho\right\vert =3 $
thus
\[
B_{\nu}^{3}\left( \gamma\right) =\sum\limits_{\left\vert \rho \right \vert =3}B_{\delta-\rho_{3},\beta-\rho_{1},\xi}^{\rho_{2}}(\gamma_{0}) \times B_{\rho}^{3}\left(  \gamma_{3}\right)
\]
On the other hand,
\begin{align*}
\widetilde{\mathcal{P}}\left(  \mathcal{V}\right)   &  =\sum
\limits_{\left\vert \nu\right\vert =3}\widetilde{b}\left(  \nu\right)  B_{\nu }^{3}\left(  \gamma\right) \\
&  =\sum\limits_{\left\vert \nu\right\vert =3}\widetilde{b}\left(  \nu\right)\sum\limits_{\left\vert \rho\right\vert =3} B_{\delta-\rho_{3},\beta-\rho_{1},\xi}^{\rho_{2}}(\gamma_{0})\times B_{\rho}^{3}\left(  \gamma_{3}\right)\\
&  =\sum\limits_{\left\vert \rho\right\vert =3}\left(  \sum \limits_{\left\vert\nu-\sigma^{3}(\rho^{0})\right\vert = \rho_{2}} \widetilde{b}\left(\nu\right)B_{\delta-\rho_{3},\beta-\rho_{1},\xi}^{\rho_{2}}(\gamma_{0})\right)B_{\rho}^{3}\left(  \gamma_{3} \right)
\end{align*}
by posing $\left. \alpha=\nu-\sigma^{3}\left(\rho^{0}\right)\right. $,\ we have $\left\vert \alpha\right\vert =\mathcal{\rho}_{2}.$\ Consequently%
\begin{align*}
\widetilde{\mathcal{P}}\left(  \mathcal{V}\right)   &  =\sum
\limits_{\left\vert \rho\right\vert =3}\left(  \sum \limits_{\left\vert \alpha\right\vert =\rho_{2}}\widetilde{b}\left(  \alpha+\sigma^{3}\left(\rho^{0}\right)  \right)  B_{\alpha}^{\rho_{2}}(\gamma_{0})\right)B_{\rho}^{3}\left(  \gamma_{3}\right) \\
&  =\sum\limits_{\left\vert \rho\right\vert =3} \widetilde{b}_{3} \left( \rho\right)B_{\rho}^{3}\left( \gamma_{3}\right) \\
&  =\widetilde{\mathcal{P}}_{3}\left( \mathcal{V}\right)
\end{align*}
with
\[
\widetilde{b}_{3}\left(  \rho\right)  =\sum\limits_{\left\vert \alpha
\right\vert =\rho_{2}}\widetilde{b}(\alpha+\sigma^{3}(\rho^{0})) B_{\alpha}^{\rho_{2}}(\gamma_{0})
\]
\end{proof}

\begin{remark}
Knowing the indexes $\left.  \alpha\right.  $\ of $\left. \widetilde{b}(\alpha)\right.  $ computed from $\left.
\widetilde{b}_{i}(\rho)\right.  $, it suffices to make a permutation of the latest in order to obtain the indexes $\left.  \beta\right.  $\ of $\left. \widetilde{b}(\beta)\right.  $\ computed in $\left.  \widetilde{b}_{i+1}(\rho)\right.  $ with modulo 3 congruence.
\end{remark}
\paragraph{ }  For $\left.  i=1,2,3\right.  $\ , there exists $
\widetilde{\mathcal{P}}_{i}^{(2)}\in\mathbb{P}_{5} $ such that $\widetilde{\mathcal{P}}_{i}^{(2)} = \widetilde{ \mathcal{P}}_{i}|_{\widetilde{\mathcal{K}}_{i}} $.  We just have to increase the degree of $ \widetilde{ \mathcal{P}}_{i} $.
\begin{proposition}[\textbf{Degree raising}]
For $  i=1,2,3  $, let define $ \widetilde{\mathcal{P}}_{i} \in \mathbb{P}_{3}\left( \mathcal{K}_{i}\right) $\ such that
\[
\widetilde{\mathcal{P}}_{i}\left( \mathcal{V}\right)  = \sum \limits_{\left\vert\alpha\right\vert =3}\widetilde{b}_{i}\left(  \alpha\right)B_{\alpha}^{3}\left(\gamma_{i}\right)
\]
then
\begin{equation} \label{eqnelev1}%
\widetilde{\mathcal{P}}_{i}\left(  \mathcal{V}\right)  =\sum\limits_{\left\vert\mu\right\vert= 4}\widetilde{b}_{i}^{(1)} \left( \mu\right)B_{\mu}^{4}\left( \gamma_{i} \right)
\end{equation}
where %
\begin{equation*} \label{eqnelevcoef1}%
\widetilde{b}_{i}^{(1)}\left(\mu\right) =\frac{1}{4}{\sum\limits_{k=1}^{3}}\mu_{k}\widetilde{b}_{i}\left( \mu-\epsilon_{k} \right)
\end{equation*}
with $\left\vert \mu\right\vert =4 $\ ,$ \epsilon_{k} $ the $ k^{th}$ \, vector in the ${\R}^{3}$ canonical basis and $\gamma_{i} = \left( \gamma_{i1},\gamma_{i2},\gamma_{i3}\right) $ represents the barycentric coordinates of $\mathcal{V}$ on $ \widetilde{\mathcal{K}}_{i} $.
\end{proposition}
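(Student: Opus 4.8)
The plan is to exploit the partition-of-unity property of the barycentric coordinates, namely $\gamma_{i1}+\gamma_{i2}+\gamma_{i3}=1$, to inflate the degree-$3$ Bernstein representation of $\widetilde{\mathcal{P}}_{i}$ into a degree-$4$ one, and then simply to read off the coefficients. First I would start from
\[
\widetilde{\mathcal{P}}_{i}\left(\mathcal{V}\right)=\sum_{\left\vert\alpha\right\vert=3}\widetilde{b}_{i}\left(\alpha\right)B_{\alpha}^{3}\left(\gamma_{i}\right)
\]
and multiply the right-hand side by $1=\gamma_{i1}+\gamma_{i2}+\gamma_{i3}$, which leaves the polynomial unchanged.

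The key computational step is the elementary identity, valid for $\left\vert\alpha\right\vert=3$ and $k\in\{1,2,3\}$,
\[
\gamma_{ik}\,B_{\alpha}^{3}\left(\gamma_{i}\right)=\frac{3!}{\alpha!}\,\gamma_{i}^{\alpha+\epsilon_{k}}=\frac{\alpha_{k}+1}{4}\,B_{\alpha+\epsilon_{k}}^{4}\left(\gamma_{i}\right),
\]
which follows directly from the definition $B_{\beta}^{d}(\lambda)=\tfrac{d!}{\beta!}\lambda^{\beta}$ together with the relation $(\alpha+\epsilon_{k})!=(\alpha_{k}+1)\,\alpha!$. Substituting this into the product above yields
\[
\widetilde{\mathcal{P}}_{i}\left(\mathcal{V}\right)=\sum_{\left\vert\alpha\right\vert=3}\widetilde{b}_{i}\left(\alpha\right)\sum_{k=1}^{3}\frac{\alpha_{k}+1}{4}\,B_{\alpha+\epsilon_{k}}^{4}\left(\gamma_{i}\right).
\]

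Next I would reorganize the double sum according to the new index $\mu=\alpha+\epsilon_{k}$, which ranges over all multi-indices with $\left\vert\mu\right\vert=4$. For a fixed such $\mu$, the pairs $(\alpha,k)$ contributing are exactly those with $\alpha=\mu-\epsilon_{k}$ and $\mu_{k}\geq1$; since $\alpha_{k}+1=\mu_{k}$, the coefficient of $B_{\mu}^{4}(\gamma_{i})$ is precisely $\tfrac{1}{4}\sum_{k=1}^{3}\mu_{k}\,\widetilde{b}_{i}(\mu-\epsilon_{k})$, which is the asserted formula for $\widetilde{b}_{i}^{(1)}(\mu)$, and uniqueness of the representation \eqref{eqnelev1} is guaranteed by the uniqueness of the BB-form. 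The proof is essentially a matter of index bookkeeping; the only point requiring a little care is the treatment of the boundary multi-indices $\mu$ having a vanishing component, so that $\mu-\epsilon_{k}$ would have a negative entry for some $k$ — but this is harmless, since the weight $\mu_{k}$ annihilates exactly those terms.
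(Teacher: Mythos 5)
Your proof is correct and follows essentially the same route as the paper's: multiply by $1=\gamma_{i1}+\gamma_{i2}+\gamma_{i3}$, apply the identity $\gamma_{ik}B_{\alpha}^{3}(\gamma_{i})=\tfrac{\mu_{k}}{4}B_{\mu}^{4}(\gamma_{i})$ with $\mu=\alpha+\epsilon_{k}$, and regroup the double sum by $\mu$. Your added remarks on the vanishing of boundary terms and on uniqueness of the BB-form are sound refinements of the same argument.
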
 %
\begin{proof}
\[
\widetilde{\mathcal{P}}_{i}\left(\mathcal{V}\right) =\sum \limits_{\left\vert\alpha\right\vert =3}\widetilde{b}_{i}\left(  \alpha\right)B_{\alpha}^{3}\left(\gamma_{i}\right) \quad\text{As}\quad 1=\gamma_{i1}+\gamma_{i2}+\gamma_{i3}
\]
by doing a memberwise mutiplication and using the fact that for $\left.  i=1,2,3\right.  $
\[
\gamma_{ik}B_{\alpha}^{3}\left( \gamma_{i}\right) =\frac{\mu_{k}}{4}B_{\mu}^{4}\left( \gamma_{i}\right)
\]
where $\left.  \mu=\alpha+\epsilon_{k}\right. $, we have :%
\begin{align*}
\widetilde{\mathcal{P}}_{i}\left(  \mathcal{V}\right)   &  = \sum \limits_{\left\vert \mu\right\vert =4}\frac{1}{4}\left( \sum \limits_{k=1}^{3} {\mu}_{k}\widetilde{b}_{i}\left(  \mu-\epsilon_{k} \right) \right)  B_{\mu}^{4}\left(  \gamma_{i}\right) \\
&  =\sum\limits_{\left\vert \mu\right\vert =4}\widetilde{b}_{i}^{(1)}%
(\mu)B_{\mu}^{4}\left( \gamma_{i}\right)
\end{align*}
by posing for $\left\vert \mu\right\vert =4$
\[
\widetilde{b}_{i}^{(1)}(\mu)=\frac{1}{4}{\sum\limits_{k=1}^{3}} \mu_{k}\widetilde{b}_{i}\left( \mu-\epsilon_{k} \right)
\]
\end{proof}
\paragraph{ }  By applying a double increasing of the degree, we have:
\begin{equation} \label{eqnelev2}%
\widetilde{\mathcal{P}}_{i}\left( \mathcal{V}\right)  =\sum \limits_{\left\vert\mu\right\vert =5}\widetilde{b}_{i}^{(2)}(\mu)B_{\mu}^{5}\left(\gamma_{i} \right)
\end{equation}
with
\begin{equation} \label{eqnelevcoef2}%
\widetilde{b}_{i}^{(2)}(\mu) = \sum\limits_{k=1}^{3}
\sum\limits_{l=1}^{3} \frac{\mu_{k}\left( \mu_{l}-\delta_{kl}\right)}{20} \widetilde{b}_{i} \left(\mu - \epsilon_{k} -\epsilon_{l} \right)
\end{equation}
where $\left.  \delta_{kl}\right. $ is the Kronecker symbol.
\paragraph{ } For $ i=1,2,3,  $ let $\mathcal{P}_{i}$\ be a polynomial of degree less or equal to 5 defined on $\mathcal{K}_{i}$ and having for Bezier coefficients $\left\{C_{i}(\beta),\left\vert \beta\right\vert =5\right\} $. By making it coincide on $\widetilde{\mathcal{K} }_{i+2}$ with the polynomial $\widetilde{\mathcal{P}}_{i+2}$ of degree 5 which has B\'{e}zier coefficients $\left\{  \widetilde{b}_{i+2} (\mu),\left\vert \mu\right\vert =5\right\}  $, it is then possible to express the coefficients $C_{i}(\beta)$ with respect to $\widetilde{b}_{i+2}(\mu ) $\ for $ i>0 $ in the  modulo 3 congruence.
\begin{proposition}[\textbf{Algo of Subdivision 2}] \label{bbcoeffCi} \textbf{ } \\
For $ i=1,2,3 $, let $\lambda_{i}$ and $\gamma_{i+2}$ be the barycentric coordinates respectively in $\mathcal{K}_{i}$ and $\widetilde{\mathcal{K}}_{i+2}. $\; Let $\widetilde{ \mathcal{P}} _{i+2} \in \mathbb{P}_{5} $ be such that $\widetilde{\mathcal{P}}_{i+2 } = \mathcal{P}_{i}|_{\widetilde {\mathcal{K}}_{i+2}}$ then
\begin{equation} \label{eqnci}%
C_{i}(\beta)={\sum\limits_{\left\vert \mu\right\vert =5}}
\widetilde{b}_{i+2}^{(2)} \left(  \mu\right) B_{\mu_{1}}^{\beta_{1}} \left(\frac{5}{3}\right)  B_{\mu_{3}}^{\beta_{3}}\left(  \frac{5}{3}\right)
\end{equation}
\end{proposition}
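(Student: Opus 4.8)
The starting point is a geometric remark: $\widetilde{\mathcal{K}}_{i+2}$ is a genuine subtriangle of $\mathcal{K}_{i}$. Indeed both triangles share the vertex $A_{0}$, while the other two vertices of $\widetilde{\mathcal{K}}_{i+2}$, namely $B_{i+1}=\frac{2}{5}A_{0}+\frac{3}{5}A_{i+2}$ and $B_{i}=\frac{2}{5}A_{0}+\frac{3}{5}A_{i+1}$, lie respectively on the two edges $[A_{0},A_{i+2}]$ and $[A_{0},A_{i+1}]$ of $\mathcal{K}_{i}$ issuing from $A_{0}$. Consequently $\widetilde{\mathcal{P}}_{i+2}=\mathcal{P}_{i}|_{\widetilde{\mathcal{K}}_{i+2}}$ is nothing but the polynomial $\mathcal{P}_{i}$ itself, so the two Bernstein--B\'{e}zier representations coincide as polynomials on $\R^{2}$ and it is enough to pass from the representation in $\gamma_{i+2}$ to the one in $\lambda_{i}$. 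The first task is to record the affine change of coordinates between the barycentric coordinates $\lambda_{i}$ in $\mathcal{K}_{i}=\langle A_{i+2},A_{0},A_{i+1}\rangle$ and $\gamma_{i+2}$ in $\widetilde{\mathcal{K}}_{i+2}=\langle B_{i+1},A_{0},B_{i}\rangle$: inverting the two defining relations yields $A_{i+2}=\frac{5}{3}B_{i+1}-\frac{2}{3}A_{0}$ and $A_{i+1}=\frac{5}{3}B_{i}-\frac{2}{3}A_{0}$, hence $\gamma_{i+2,1}=\frac{5}{3}\lambda_{i1}$, $\gamma_{i+2,3}=\frac{5}{3}\lambda_{i3}$ and, using $\lambda_{i1}+\lambda_{i2}+\lambda_{i3}=1$, $\gamma_{i+2,2}=1-\frac{5}{3}\lambda_{i1}-\frac{5}{3}\lambda_{i3}=\lambda_{i2}-\frac{2}{3}\lambda_{i1}-\frac{2}{3}\lambda_{i3}$.

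The core of the proof is then the expansion of a single Bernstein polynomial under this substitution, carried out in the same spirit as the computation in the proof of the first subdivision algorithm. Plugging the above into $B_{\mu}^{5}(\gamma_{i+2})=\frac{5!}{\mu!}\gamma_{i+2}^{\mu}$, I would factor out $(\frac{5}{3}\lambda_{i1})^{\mu_{1}}$ and $(\frac{5}{3}\lambda_{i3})^{\mu_{3}}$ and expand the remaining factor $(\lambda_{i2}-\frac{2}{3}\lambda_{i1}-\frac{2}{3}\lambda_{i3})^{\mu_{2}}$ by the multinomial theorem, with generic exponent $(\beta_{2},b,c)$, $\beta_{2}+b+c=\mu_{2}$. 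Each resulting monomial is $\lambda_{i1}^{\beta_{1}}\lambda_{i2}^{\beta_{2}}\lambda_{i3}^{\beta_{3}}$ with $\beta_{1}=\mu_{1}+b$, $\beta_{3}=\mu_{3}+c$ and $|\beta|=5$; rewriting it through $\lambda_{i}^{\beta}=\frac{\beta!}{5!}B_{\beta}^{5}(\lambda_{i})$, one collects the scalar in front of $B_{\beta}^{5}(\lambda_{i})$ and checks, using $1-\frac{5}{3}=-\frac{2}{3}$ together with the convention $\binom{\beta_{1}}{\mu_{1}}=0$ when $\mu_{1}>\beta_{1}$, that it equals $B_{\mu_{1}}^{\beta_{1}}(\frac{5}{3})\,B_{\mu_{3}}^{\beta_{3}}(\frac{5}{3})$. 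This gives the key identity $B_{\mu}^{5}(\gamma_{i+2})=\sum_{|\beta|=5}B_{\mu_{1}}^{\beta_{1}}(\frac{5}{3})\,B_{\mu_{3}}^{\beta_{3}}(\frac{5}{3})\,B_{\beta}^{5}(\lambda_{i})$.

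To finish, I would substitute this identity into $\widetilde{\mathcal{P}}_{i+2}=\sum_{|\mu|=5}\widetilde{b}_{i+2}^{(2)}(\mu)B_{\mu}^{5}(\gamma_{i+2})$, interchange the two finite sums, and compare with $\mathcal{P}_{i}=\sum_{|\beta|=5}C_{i}(\beta)B_{\beta}^{5}(\lambda_{i})$; since $\{B_{\beta}^{5}(\lambda_{i})\}_{|\beta|=5}$ is a basis of $\mathbb{P}_{5}$ by the uniqueness part of the BB-form, identifying coefficients gives exactly (\ref{eqnci}). By the modulo-$3$ symmetry of the construction --- as already invoked for the first subdivision algorithm --- it suffices to argue for one value of $i$. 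I expect the only delicate point to be the bookkeeping of the middle step: keeping the triple multinomial index aligned with $\beta$ and verifying that the regrouped combinatorial factor collapses to the product of the two univariate Bernstein values at $5/3$, after which the extension of the summation index back to all $|\mu|=5$ is harmless because the extra terms carry a vanishing binomial coefficient. A shorter, almost computation-free variant would be to use the polar (blossom) form $\mathcal{Q}$ of $\mathcal{P}_{i}$: from $C_{i}(\beta)=\mathcal{Q}[A_{i+2}^{\beta_{1}},A_{0}^{\beta_{2}},A_{i+1}^{\beta_{3}}]$, substituting $A_{i+2}=\frac{5}{3}B_{i+1}-\frac{2}{3}A_{0}$ and $A_{i+1}=\frac{5}{3}B_{i}-\frac{2}{3}A_{0}$ and expanding by multiaffinity reproduces (\ref{eqnci}) directly, with $\widetilde{b}_{i+2}^{(2)}(\mu)=\mathcal{Q}[B_{i+1}^{\mu_{1}},A_{0}^{\mu_{2}},B_{i}^{\mu_{3}}]$.
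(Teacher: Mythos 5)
Your proof is correct and follows essentially the same route as the paper: the same inversion $A_{i+2}=\frac{5}{3}B_{i+1}-\frac{2}{3}A_{0}$, $A_{i+1}=\frac{5}{3}B_{i}-\frac{2}{3}A_{0}$ giving $\gamma_{i+2,1}=\frac{5}{3}\lambda_{i1}$, $\gamma_{i+2,3}=\frac{5}{3}\lambda_{i3}$, the same multinomial expansion of $B_{\mu}^{5}(\gamma_{i+2})$ into the basis $\{B_{\beta}^{5}(\lambda_{i})\}$ with $\beta_{1}=\mu_{1}+\delta$, $\beta_{3}=\mu_{3}+\eta$, and the same interchange of sums and identification of coefficients. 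Your explicit appeal to the vanishing-binomial convention and the concluding blossom remark are tidy clarifications rather than a different argument.
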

\begin{proof}
$\forall\mathcal{V} \in \mathcal{K}_{i}$,%
\begin{equation*}
\left.
\begin{array}{r c l}
\mathcal{V} &=& \lambda_{i1}A_{i+2}+\lambda_{i2}A_{0}+\lambda
_{i3}A_{i+1} , \\
\\
  &=&  \lambda_{i1}(\frac{5}{3}B_{i+1}-\frac{2}{3}A_{0})+ \lambda_{i2}A_{0} +\lambda_{i3}(\frac{5}{3}B_{i}-\frac{2}{3}A_{0}),\\
\\  
  &=& \frac{5}{3}\lambda_{i1}B_{i+1}+(-\frac{2}{3} \lambda_{i1} + \lambda_{i2}-\frac{2}{3}\lambda_{i3})A_{0}+\frac{5}{3} \lambda_{i3} B_{i} \\
  \\
&  = & \gamma_{i+2,1}B_{i+1}+\gamma_{i+2,2}A_{0}+\gamma_{i+2,3}B_{i}  
\end{array}
\right.
\end{equation*}
Let $\mu=\left(  \mu_{1},\mu_{2},\mu_{3}\right)  $  be a mulit-index
of ${\N}$\  such that $\left\vert \mu\right\vert =5,$ %
\begin{equation*}
\left.
\begin{array}{r c l}
B_{\mu}^{5}\left(  \gamma_{i+2}\right) & = & \dfrac{5!}{\mu!}\gamma_{i+2}^{\mu } \\
\\
&  = & \dfrac{5!}{\mu_{1}!\mu_{2}!\mu_{3}!}\left(  \dfrac{5}{3}\lambda_{i1}\right)^{\mu_{1}} \times \left(  \dfrac{5}{3}\lambda_{i3}\right)^{\mu_{3}} \times \left(-\dfrac{2}{3}\lambda_{i1}+\lambda_{i2}-\dfrac{2}{3}%
\lambda_{i3}\right)  ^{\mu_{2}}\\
\\
&  = & \dfrac{5!}{\mu_{1}!\mu_{2}!\mu_{3}!}\left( \dfrac{5}{3}\right)  ^{\mu_{1}}\left( \dfrac{5}{3}\right)^{\mu_{3}} \lambda_{i1}^{\mu_{1}} \lambda_{i3}^{\mu_{3}} \times  \\
\\
&&  {\sum\limits_{\delta+\nu+\eta=\mu_{2}}}\dfrac{\mu_{2}!}{\delta!\nu!\eta!}\left( -\dfrac{2}{3}\lambda_{i1}\right)^{\delta}\left(  \lambda_{i2}\right)^{\nu}\left( -\dfrac{2}{3}\lambda _{i3}\right)  ^{\eta} \\
\\
&  = & {\sum\limits_{\mu_{1}+\delta+\nu+\mu_{3}+\eta=5}}
\dfrac{\beta_{1}!}{\mu_{1}!\delta!}\left( \dfrac{5}{3}\right)^{\mu_{1}} \left( 1-\dfrac{5}{3}\right)^{\delta} \dfrac{\beta_{3}!}{\mu_{3}!\eta!} \times  \\
\\
&& \left(\dfrac{5}{3}\right)^{\mu_{3}}\left(1-\dfrac{5}{3} \right)^{\eta} \times \dfrac{5!}{\beta_{1}!\beta_{2}!\beta_{3}!} \lambda_{i1}^{\beta_{1}}\lambda _{i2}^{\beta_{2}} \lambda_{i3}^{\beta_{3}} \\
\\
&  = & { \sum\limits_{\left\vert \beta\right\vert =5}}
B_{\mu_{1}}^{\beta_{1}}(\dfrac{5}{3})\times B_{\mu_{3}}^{\beta_{3}}(\dfrac{5}{3})\times B_{\beta}^{5}\left( \lambda_{i}\right)
\end{array}
\right.
\end{equation*}
\paragraph{ }  By posing $\beta_{1}=\mu_{1}+\delta$\ , let define
$ \beta_{2}=\nu $\ and $ \beta_{3} =\mu_{3}+\eta $\ with $ \left\vert \beta\right\vert =\left\vert \mu\right\vert =5. $
\paragraph{ }  Then $\left.  \forall\mathcal{V} \in \widetilde{ \mathcal{K}}_{i+2} \right. $
\begin{eqnarray*}
\widetilde{\mathcal{P}}_{i+2}\left( \mathcal{V}\right) &  =  & \sum
\limits_{\left\vert \mu\right\vert =5}\widetilde{b}_{i+2}^{(2)}\left(
\mu\right)  B_{\mu}^{5}\left(  \gamma_{i+2}\right) \\
&  = & \sum\limits_{\left\vert \mu\right\vert =5} \widetilde{b}_{i+2}^{(2)} \left( \mu\right) \left( {\sum \limits_{\left\vert \beta\right\vert =5}}B_{\mu_{1}}^{\beta_{1}}(\frac{5}{3})\times B_{\mu_{3}}^{\beta_{3}}(\frac{5} {3})\times B_{\beta}^{5} \left(  \lambda_{i}\right)\right) \\
&  = & \sum\limits_{\left\vert \beta\right\vert =5}\left({\sum \limits_{\left\vert \mu\right\vert =5}}\widetilde{b}_{i+2}^{(2)} \left( \mu \right)B_{\mu_{1}}^{\beta_{1}}(\frac {5}{3})\times B_{\mu_{3}}^{\beta_{3}}(\frac{5}{3})\right) B_{\beta}^{5} \left(  \lambda_{i}\right) \\
&  = & \sum\limits_{\left\vert \beta\right\vert =5}C_{i}(\beta)\times B_{\beta} ^{5}\left(  \lambda_{i}\right) \\
&  = & \mathcal{P}_{i}\left(  \mathcal{V}\right)
\end{eqnarray*}
If one poses
\begin{equation*}
C_{i}(\beta)={\sum\limits_{\left\vert \mu\right\vert =5}}
\widetilde{b}_{i+2}^{(2)} \left(  \mu\right)  B_{\mu_{1}}^{\beta_{1}}(\frac {5}{3})\times B_{\mu_{3}}^{\beta_{3}}(\frac{5}{3})
\end{equation*}
\end{proof}%
\paragraph{ } At this stage, all B-coefficients defined by equation~\eqref{eqnbbi} are entirely determined using formul\ae \, \eqref{eqnpol3},\,\eqref{eqnelevcoef2}\, and \eqref{eqnci}. The nine (09) B-coefficients which are unknown are thus presented:
\begin{itemize}
\item On $\mathcal{K}_{1}$ we have :%
\begin{align*}
C_{1}(2,3,0)  &  =-\frac{1}{162}\widetilde{b}(3,0,0)+\frac{1}{54} \widetilde{b}(2,1,0)-\frac{1}{54}\widetilde{b}(2,0,1)+\frac{1}{3}\widetilde{b}(1,2,0)+\frac{1}{27}\widetilde{b}(1,1,1)\\
&  -\frac{1}{54}\widetilde{b}(1,0,2)+\frac{25}{81}\widetilde{b}(0,3,0)+\frac{1}{3}\widetilde{b}(0,2,1)+\frac{1}{54}\widetilde{b}(0,1,2)-\frac{1}{1,6,2}\widetilde{b}(0,0,3) \\
C_{1}(1,4,0)  &  =\frac{1}{9}\widetilde{b}(2,1,0)+\frac{2}{9} \widetilde{b}(1,2,0)+\frac{2}{9}\widetilde{b}(1,1,1) \\
&+\frac{1}{9}\widetilde{b}(0,3,0)+\frac{2}{9}\widetilde{b}(0,2,1) +\frac{1}{9}\widetilde{b}(0,1,2)
\end{align*}%
\begin{align*}
C_{1}(1,3,1)  &  =\frac{1}{81}\widetilde{b}(3,0,0)+\frac{17}{54} \widetilde{b}(2,1,0)+\frac{1}{54}\widetilde{b}(2,0,1) +\frac{17}{54}\widetilde{b}(1,2,0)\\
& +\frac{17}{54}\widetilde{b}(1,1,1) + \frac{1}{81}\widetilde{b}(0,3,0)+\frac{1}{54}\widetilde{b}(0,2,1)-\frac{1}{162}\widetilde{b}(0,0,3)\\
C_{1}(041)  &  =\frac{1}{9}\widetilde{b}(3,0,0)+\frac{2}{9} \widetilde{b}(2,1,0)+\frac{2}{9}\widetilde{b}(2,0,1)\\
&  +\frac{1}{9}\widetilde{b}(1,2,0)+\frac{2}{9}\widetilde{b}(1,1,1)+\frac{1}{9}\widetilde{b}(1,0,2)
\end{align*}%
\begin{align*}
C_{1}(0,3,2)  &  =\frac{25}{81}\widetilde{b}(3,0,0)+\frac{1}{3} \widetilde{b}(2,1,0)+\frac{1}{3}\widetilde{b}(2,0,1) +\frac{1}{54}\widetilde{b}(1,2,0)+\frac{1}{27}\widetilde{b}(1,1,1)\\
&  + \frac{1}{54}\widetilde{b}(1,0,2) -\frac{1}{162}\widetilde{b}(0,3,0)-\frac{1}{54}\widetilde{b}(0,2,1) - \frac{1}{54}\widetilde{b}(0,1,2)-\frac{1}{162}\widetilde{b}(0,0,3)
\end{align*}%
\end{itemize}
\begin{itemize}
\item On $\mathcal{K}_{2}$ we have :%
\begin{align*}
C_{2}(2,3,0)  &  =-\frac{1}{162}\widetilde{b}(3,0,0)-\frac{1}{54} \widetilde{b}(2,1,0)+\frac{1}{54}\widetilde{b}(2,0,1) -\frac{1}{54}\widetilde{b}(1,2,0)+\frac{1}{27}\widetilde{b}(1,1,1) \\
& + \frac{1}{3}\widetilde{b}(1,0,2) -\frac{1}{162}\widetilde{b}(0,3,0)+\frac{1}{54}\widetilde{b}(0,2,1) + \frac{1}{3}\widetilde{b}(0,1,2) +\frac{25}{81}\widetilde{b}(0,0,3) \\
C_{2}(1,4,0)  & =\frac{1}{9}\widetilde{b}(2,0,1)+\frac{2}{9} \widetilde{b}(1,1,1)+\frac{2}{9}\widetilde{b}(1,0,2) \\
& +\frac{1}{9} \widetilde{b}(0,2,1)+\frac{2}{9}\widetilde{b}(0,1,2) + \frac{1}{9} \widetilde{b}(0,0,3)\\
C_{2}(1,3,1)  &  =-\frac{1}{162}\widetilde{b}(3,0,0)+ \frac{1}
{54} \widetilde{b}(1,2,0)+\frac{17}{54}\widetilde{b}(1,1,1) +\frac{1}{54}\widetilde{b}(1,0,2)\\
& +\frac{1}{81}\widetilde{b}(0,3,0) + \frac{17}{54}\widetilde{b}(0,2,1) +\frac{17}{54}\widetilde{b}(0,1,2)+\frac{1}{81}\widetilde{b}(0,0,3)
\end{align*}
\end{itemize}
\begin{itemize}
\item On $\mathcal{K}_{3}$ we have:%
\begin{align*}
C_{3}(1,3,1)  &  =\frac{1}{81}\widetilde{b}(3,0,0)+\frac{1}{54}
\widetilde{b}(2,1,0)+\frac{17}{54}\widetilde{b}(2,0,1) +\frac{17}{54}\widetilde{b}(1,1,1)\\
& +\frac{17}{54}\widetilde{b}(1,0,2) - \frac{1}{162}\widetilde{b}(0,3,0)+\frac{1}{54}\widetilde{b}(0,1,2)+\frac{1}{81}\widetilde{b}(0,0,3)
\end{align*}
\end{itemize}
\paragraph{ } By expressing the 10 B-coefficients $ b(\alpha),\, \vert \alpha \vert =3, $\, as a function of the 9 B-coefficients  $ C_{i}(j,2,3-j),$ \\
$ i,j=1,2,3 $\, and $ C_{1}(0,5,0), $\, the 5 unknown B-coefficients on $\mathcal{K}_{1}$\, can be written as follows:
\begin{equation*}
\left\{
\begin{array}{r c l}
C_{1}(2,3,0 )&=& \frac{1}{3}\,C_{{1}} \left( 2,2,1 \right) +\frac{1}{3}\,C_{{2}} \left( 0,2,3 \right) + \frac{1}{3}\,C_{{2}} \left( 1,2,2 \right)   \\
C_{1}(1,4,0 )&=&  C_{{1}} \left( 0,5,0 \right) -\frac{1}{27}\,C_{{1}} \left( 0,2,3 \right) +\frac{1}{9}\,C_{{1}} \left( 2,2,1 \right) -\frac{1}{27} \,C_{{3}} \left( 0,2,3 \right) \\
& & -\frac{1}{9}\,C_{{3}} \left( 1,2,2 \right) -\frac{1}{9}\,C_{{3}} \left( 2,2,1 \right) +{\frac {2}{27}}\,C_{{2}} \left( 0,2,3 \right) +\frac{1}{9}\,C_{{2}} \left( 1,2,2 \right)  \\
C_{1}(1,3,1 )&=& \frac{3}{2}\,C_{{1}} \left( 0,5,0 \right) -\frac{1}{18}\,C_{{1}} \left( 0,2,3 \right) +\frac{1}{6}\,C_{{1}} \left( 1,2,2 \right) +\frac{1}{6}\,C_{{1}} \left( 2,2,1 \right) \\
& & -\frac{1}{18}\,C_{{3}} \left( 0,2,3 \right) -\frac{1}{6}\,C_{{3}} \left( 1,2,2 \right) -\frac{1}{6}\,C_{{3}} \left( 2,2,1 \right) -\frac{1}{18}\,C_{{2}} \left( 0,2,3 \right) \\
& & -\frac{1}{6}\,C_{{2}} \left( 1,2,2 \right) -\frac{1}{6}\,C_{{2}} \left( 2,2,1 \right) \\
C_{1}(0,4,1 )&=& C_{{1}} \left( 0,5,0 \right) +{\frac {2}{27}} \,C_{{1}} \left( 0,2,3 \right) +\frac{1}{9}\,C_{{1}} \left( 1,2,2 \right) -\frac{1}{27}\,C_{{3}} \left( 0,2,3 \right) \\
& & +\frac{1}{9}\,C_{{3}} \left( 2,2,1 \right) -\frac{1}{27}\,C_{{2}} \left( 0,2,3 \right) -\frac{1}{9}\,C_{{2}} \left( 1,2,2 \right) -\frac{1}{9}\,C_{{2}} \left( 2,2,1 \right)  \\
C_{1}(0,3,2 )&=&\frac{1}{3}\,C_{{1}} \left( 0,2,3 \right) +\frac{1}{3}\,C_{{1}} \left( 1,2,2 \right) +\frac{1}{3}\,C_{{3}} \left( 2,2,1 \right)
\end{array}
\right.
\end{equation*}
The same method is applied to obtain the B-coefficients in the triangles $\mathcal{K}_{2} \text{ and } \mathcal{K}_{3}$.
\subsection{Class determination}
\paragraph{ } It just remains to proove that the constructed spline on $ \mathcal{K} $ according to this principle is of $ C^{2}$ class. We just have to verify that for $ i>0 $ and $ 0\leq\alpha\leq2 $,
\[
\partial^{\alpha}\mathcal{P}_{i}=\partial^{\alpha}\mathcal{P}_{i+1}%
\]
along the edge $\left[  A_{0},A_{i+2}\right]  $\ in the modulo 3 congruence, $ i>0$.
\begin{proposition}
Let be $ {\P}_{5}^{2}\left( \mathcal{K}\right)= \mathbb{PP}_{5} \left(  \mathcal{K}\right) \cap C^{2}\left( \mathcal{K}\right) $ the vectorial space of polynomial spline functions of $ C^{2} $ class defined on $ \mathcal{K} $. \\
If a function $ \mathcal{S} $\ is such that its restriction on $\mathcal{K}_{i} $ is a polynomial $\mathcal{P}_{i} $ with B\'{e}zier coefficients which are computed from the \textbf{formula}~\eqref{eqnci} , then $ \mathcal{S} \in{\P}_{5}^{2} \left( \mathcal{K}\right) $.
\end{proposition}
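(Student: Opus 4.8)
The plan is the following. First, $\mathcal{S}\in\mathbb{PP}_5(\mathcal{K})$ costs nothing: each $\mathcal{P}_i$ is a quintic, and the nine indeterminate coefficients have been fixed so that the three pieces already match along the cuts $[A_0,A_{i+2}]$, i.e.\ $\mathcal{S}\in C^0(\mathcal{K})$. So the entire content is $C^2$-smoothness, and --- exactly as announced in the text preceding the statement --- it is enough, using the invariance of the construction under the relabeling $i\mapsto i+1$ modulo $3$, to prove $\partial^{\alpha}\mathcal{P}_i=\partial^{\alpha}\mathcal{P}_{i+1}$ along the interior edge $e=[A_0,A_{i+2}]$ shared by $\mathcal{K}_i$ and $\mathcal{K}_{i+1}$ for every $|\alpha|\le 2$; equivalently, the Bézier relations \eqref{eqncont} of Theorem~\ref{theocont} for $k=0,1,2$ across $e$ (where the opposite-vertex weight reduces to $(3,-1,-1)$), i.e.\ coincidence of the trace and of the first two cross-boundary derivatives of $\mathcal{P}_i$ and $\mathcal{P}_{i+1}$ on $e$. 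If one reads the hypothesis literally --- all twenty-one coefficients of each $\mathcal{P}_i$ produced by \eqref{eqnci} --- this is immediate, since by Proposition~\ref{bbcoeffCi} each $\mathcal{P}_i$ then coincides with the cubic $\widetilde{\mathcal{P}}$ on the two-dimensional set $\widetilde{\mathcal{K}}_{i+2}$, forcing $\mathcal{P}_i\equiv\widetilde{\mathcal{P}}$ and hence $\mathcal{S}=\widetilde{\mathcal{P}}\in C^{\infty}$; so I address the substantive case, where \eqref{eqnci} supplies only the five indeterminate coefficients of each $\mathcal{P}_i$.

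The idea is to route the verification through the single cubic $\widetilde{\mathcal{P}}\in\mathbb{P}_3(\widetilde{\mathcal{K}}_0)$, rather than to manipulate \eqref{eqncont} coefficient by coefficient. The mechanism behind \eqref{eqnci} (Proposition~\ref{bbcoeffCi}) is that it produces the Bézier coefficients on $\mathcal{K}_i$ of the piece $\widetilde{\mathcal{P}}_{i+2}=\widetilde{\mathcal{P}}|_{\widetilde{\mathcal{K}}_{i+2}}$ obtained from $\widetilde{\mathcal{P}}$ by the subdivision \eqref{eqnpol3} around $A_0$ followed by two degree raisings \eqref{eqnelevcoef2}; symmetrically the indeterminates of $\mathcal{P}_{i+1}$ come from $\widetilde{\mathcal{P}}_i=\widetilde{\mathcal{P}}|_{\widetilde{\mathcal{K}}_i}$. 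Now $\widetilde{\mathcal{K}}_{i+2}$ and $\widetilde{\mathcal{K}}_i$ are two of the three subtriangles into which $A_0$ splits $\widetilde{\mathcal{K}}$; they are adjacent along the segment $[A_0,B_{i+1}]$; and this segment lies on the line carrying $e$ because $B_{i+1}=\frac{2}{5}A_0+\frac{3}{5}A_{i+2}$. Since $\widetilde{\mathcal{P}}$ is one honest, hence $C^{\infty}$, polynomial, its derivatives are single-valued across $[A_0,B_{i+1}]$; and if one shows that near $e$ the pieces $\mathcal{P}_i$ and $\mathcal{P}_{i+1}$ inherit this behaviour --- i.e.\ that their Bézier coefficients in the three layers adjacent to $e$ are those of $\widetilde{\mathcal{P}}$, possibly up to the corner blocks at $A_{i+2}$ --- then $\partial^{\alpha}\mathcal{P}_i=\partial^{\alpha}\mathcal{P}_{i+1}$ holds on the infinite set $[A_0,B_{i+1}]$ for $|\alpha|\le 2$, hence on the whole line and in particular on $e$ (the objects compared are univariate polynomials once restricted to that line). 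Repeating at each interior edge gives $\mathcal{S}\in C^2(\mathcal{K})$, so $\mathcal{S}\in{\P}_5^2(\mathcal{K})$.

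The careful part --- and the main obstacle --- is this inheritance claim, because $\mathcal{P}_i$ does not coincide with $\widetilde{\mathcal{P}}$ on all of $\widetilde{\mathcal{K}}_{i+2}$, only at the control points that matter. Among the fifteen coefficients $C_i(\beta)$ feeding the three layers adjacent to $e$: the six with $\beta$ in the corner block at $A_{i+2}$ encode $f,Df,D^2f$ at the shared vertex and therefore agree, layer by layer, with those of $\mathcal{P}_{i+1}$ because the full $C^2$-Hermite datum of $f$ at $A_{i+2}$ is prescribed; the eight remaining ones with $\beta_2\ge 2$ --- the five indeterminates (by \eqref{eqnci}) together with $C_i(2,2,1),\,C_i(1,2,2),\,C_i(0,5,0)=f(A_0)$, which are exactly the data used to pin $\widetilde{\mathcal{P}}$ down --- agree with $\widetilde{\mathcal{P}}$; the one exceptional coefficient is $C_i(2,1,2)$, which has $\beta_2=1$ and enters the $k=2$, $\rho=1$ instance of \eqref{eqncont}. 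I expect that clearing this single relation is where the real work lies: one has to insert the explicit value of $C_i(2,1,2)$ from the big display --- built only from $Df(A_{i2})\cdot\overrightarrow{A_{i2}A_0}$ and from corner-controlled coefficients --- into that one case of \eqref{eqncont}, together with the analogous coefficient of $\mathcal{P}_{i+1}$, and check it by hand. A subordinate point is to confirm that the ten conditions defining $\widetilde{\mathcal{P}}$ are consistent, so that such a cubic exists; this follows from the $C^0$-compatibility of the prescribed data along the interior edges.
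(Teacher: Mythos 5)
Your reduction is the same as the paper's: smoothness inside each $\mathcal{K}_i$ is free, and everything rests on the fifteen Bernstein--B\'ezier relations of Theorem~\ref{theocont} across each interior edge $[A_0,A_{i+2}]$, i.e.\ \eqref{eqncl0}--\eqref{eqncl2} with $\mu=(-1,3,-1)$ (not $(3,-1,-1)$, given the ordering $\mathcal{K}_i=\langle A_{i+2},A_0,A_{i+1}\rangle$). Your idea of routing the check through the single cubic $\widetilde{\mathcal{P}}$ is a reasonable way to organise what the paper leaves implicit, and it does settle every relation all of whose terms lie in one of your two classes: the corner block at $A_{i+2}$ (consistency of the prescribed $2$-jet of $f$ there) or the cubic-controlled block (the five indeterminates, the $C_i(j,2,3-j)$ and $f(A_0)$, all equal to coefficients of the one polynomial $\widetilde{\mathcal{P}}$).

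The gap is in the bookkeeping: a relation of \eqref{eqncont} is not attached to a single coefficient, and three relations per edge --- not one --- straddle your classes. For $k=1$, $\rho=2$, \eqref{eqncl1} reads $C_{i+1}(1,2,2)=-C_i(3,2,0)+3C_i(2,3,0)-C_i(2,2,1)$. Here $C_{i+1}(1,2,2)$, $C_i(2,2,1)$ and $C_i(2,3,0)$ are cubic-controlled, but $C_i(3,2,0)=\frac{1}{20}D^2f(A_{i+2})\cdot{\overrightarrow{A_{i+2}A_{0}}}^{2}+\frac{2}{5}Df(A_{i+2})\cdot\overrightarrow{A_{i+2}A_{0}}+f(A_{i+2})$ is a corner coefficient; after substituting the $\widetilde{\mathcal{P}}$-values, the relation collapses to ``$C_i(3,2,0)$ equals the $(3,2,0)$ coefficient of $\widetilde{\mathcal{P}}$ on $\mathcal{K}_i$'', which is not automatic, since the cubic $\widetilde{\mathcal{P}}$ is pinned by ten point values and knows nothing a priori about the $2$-jet of $f$ at $A_{i+2}$. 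The same straddling occurs for $k=2$, $\rho=2$ (again through $C_i(3,2,0)$) and for $k=2$, $\rho=1$ (through $C_i(4,1,0)$, $C_i(3,2,0)$, $C_i(3,1,1)$ together with $C_i(2,1,2)$). These three mixed relations are exactly what the long explicit formulas for $C_i(2,1,2)$, $C_i(1,2,2)$, $C_i(2,2,1)$ are designed to enforce, and substituting those formulas and checking the resulting identities is the entire content of the proposition. You defer that computation (``check it by hand''), and it would have to be done for all three relations, not only the one involving $C_i(2,1,2)$. In fairness, the paper's own proof stops at the same point, asserting only that \eqref{eqncl0}--\eqref{eqncl2} ``could be verified'' from \eqref{eqnci}; but your claim that a single exceptional relation remains makes the argument, as written, incorrect rather than merely incomplete.
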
 %
\begin{proof}
We have to proove that $\mathcal{S}$\ is of $C^{2}$ class along each interior edges of $\mathcal{K}$. \\ %
Recalling \textbf{equation}~\eqref{eqncont} of \textbf{theorem}~\ref{theocont}  with $ \mu=\left(  -1,3,-1\right) $, $ r=2 $, $ d=5 $ and $ 0\leq k\leq2 $, for $ i=1,2,3 $, with respect to the edge $\left[ A_{0},A_{i+2}\right] $.\\
We have to verify for $ 0\leq\rho \leq5-k $ that :%
\begin{equation} \label{eqnderiv1}%
C_{i+1}\left( k,\rho,5-k-\rho\right)= C_{i}^{k}\left( 5-k-\rho
,\rho,0\right) \left( \mu \right)
\end{equation}
in the modulo 3 congruence for $i>0. $ Hence: %
\begin{itemize}
\item for $k=0$ and $0\leq \rho \leq 5$ we have %
\begin{equation} \label{eqncl0}
C_{i+1}\left( 0,\rho ,5-\rho \right) =C_{i}\left( 5-\rho ,\rho ,0\right)
\end{equation}
\item for $k=1$ and $0\leq \rho \leq 4$ we have %
\begin{eqnarray} \label{eqncl1}
C_{i+1}\left( 1,\rho ,4-\rho \right) &=&C_{i}^{1}\left( 4-\rho ,\rho
,0\right) \left(\mu \right)   \\
&=&-C_{i}\left( 5-\rho ,\rho ,0\right) + 3C_{i}\left( 4-\rho ,\rho +1,0\right) -C_{i}\left( 4-\rho ,\rho ,1\right)  \notag
\end{eqnarray}
\item for $k=2$ and $0\leq \rho \leq 3$ we have %
\begin{eqnarray} \label{eqncl2}
C_{i+1}\left( 2,\rho ,3-\rho \right)  &=&C_{i}^{2}\left( 3-\rho ,\rho
,0\right) \left( \mu \right)    \\
&=&C_{i}\left( 5-\rho ,\rho ,0\right) -6C_{i}\left( 4-\rho ,\rho +1,0\right) +9C_{i}\left( 3-\rho ,\rho +2,0\right) \notag  \\ %
&&+2C_{i}\left( 4-\rho ,\rho ,1\right)  -6C_{i}\left( 3-\rho ,\rho +1,1\right) +C_{i}\left( 3-\rho ,\rho ,2\right) \notag
\end{eqnarray}
\end{itemize}
\textbf{Equations}~\eqref{eqncl0}, \eqref{eqncl1} and \eqref{eqncl2} could be verified by using the \textbf{relation}~\eqref{eqnci} of \textbf{proposition}~\ref{bbcoeffCi}.
\end{proof}
\section{Conclusion}
\paragraph{ } This item following the paper entitled "Revisiting the Clough-Tocher $ C^1 $ finite element", gives a new approach for constructing piecewise polynomial finite elements of $ C^{2} $ class based on the subdivision of Clough-Tocher.%
\paragraph{ } Using the subdivision algorithms and the principle of Bernstein-B\'{e}zier's polynomial degree elevating, we show that is possible to compute the B-coefficients of piecewise polynomials of degree 5 defined over B\'{e}zier triangles.%
\paragraph{ } This process will be used in a general way to build a family of finite elements of $C^{r} $ class based on the subdivision of Clough-Tocher for $r\geq2 $. This work is still in progress.
\section*{Acknowledgement}
\paragraph{ } The authors are very grateful and would like to express their thanks to the anonymous referees for their valuable comments and helpful suggestions that improved the present paper.
\bibliography{haudiekouabiblio}

\end{document}